\newcommand{\PP}{ \mathbb{P}}
\newcommand{\M}{{\mathbb M}}
\newcommand{\EE}{{\mathbb E}}
\newcommand{\T}{{\mathbb{T}}}
\newtheorem{remark}{\textbf{Remark}}[section]
\newtheorem{theorem}{\textbf{Theorem}}[section]
\newtheorem{proposition}{\textbf{Proposition}}[section]
\numberwithin{equation}{section}
\title[A fully-discrete scheme for systems of nonlinear FPK equations]{A fully-discrete scheme for systems of nonlinear Fokker-Planck-Kolmogorov equations} 
\author{
Elisabetta Carlini  \and     Francisco J. Silva   }
\def\dd{{\rm d}}
\def\weight(#1,#2){c_{#1,#2}}
\def\B{\mathcal{B}}
\def\F{\mathcal{F}}
\def\G{\mathcal{G}}
\def\K{\mathcal{K}}
\def\M{\mathcal{M}}
\def\P{\mathcal{P}}
\def\T{\mathcal{T}}
\def\eps{\varepsilon}
\def\half{\mbox{$\frac{1}{2}$}}
\def\1B{{\bf  1}}
\newcommand{\NN}{\mathbb{N}}
\newcommand{\ZZ}{\mathbb{Z}}
\newcommand{\RR}{\mathbb{R}}
\def\II{\mathbb{I}}
\def\EE{\mathbb{E}}
\def\PP{\mathbb{P}}
\newcommand\be{\begin{equation}}
\newcommand\ee{\end{equation}}
\newcommand\ba{\begin{array}}
\newcommand\ea{\end{array}}
\newcommand{\bean}{\begin{eqnarray*}}
\newcommand{\eean}{\end{eqnarray*}}
\thanks{  ``Sapienza'', Universit\`a di Roma, Dipartimento di Matematica Guido Castelnuovo, 00185 Rome, Italy (carlini@mat.uniroma1.it).}
\thanks{ Institut de recherche XLIM-DMI, UMR-CNRS 7252 Facult\'e des sciences et techniques 
Universit\'e de Limoges, 87060 Limoges, France and TSE-R, Universit\'e Toulouse I Capitole  (francisco.silva@unilim.fr) }
\begin{document}
\maketitle
\begin{abstract} We consider a system of Fokker-Planck-Kolmogorov (FPK) equations, where the dependence of the coefficients is nonlinear and nonlocal in time with respect to the unknowns. We extend the numerical scheme proposed and studied in \cite{Carlini_Silva_2017} for a single FPK equation of this type. We analyse the convergence of the scheme and we study its applicability in two examples. The first one concerns a population model involving two interacting species and the second one concerns two populations Mean Field Games. 
\end{abstract}\vspace{0.5cm}

\small {\bf AMS-Subject Classification:}  35Q84,	 65N12, 65N75.  \vspace{0.2cm}

\small {\bf Keywords:} Systems of nonlinear Fokker-Planck-Kolmogorov equations, Numerical Analysis, Semi-Lagrangian schemes, Markov chain approximation, Mean Field Games.
\normalsize

\section{Introduction} In this note we consider the following system of nonlinear Fokker-Planck-Kolmogorov (FPK) equations  
$$\displaystyle
\ba{rcl} \partial_{t} m^{\ell}  -\half \underset{1\leq i,j \leq d}\sum \partial_{x_i,x_j}^2\left(a_{i,j}^{\ell}(m, x,t)m^{\ell}\right)+\sum_{i=1}^{d_\ell}\partial_{x_i}\left(b^{\ell}(m,x,t)m^{\ell}\right) &=& 0,  \hspace{0.5cm} \mbox{in } \; \RR^{d_\ell} \times (0,T), \\[4pt]
													m^{\ell}(0)&=&\bar{m}_{0}^{\ell}  \hspace{0.5cm} \mbox{in } \; \RR^{d_\ell},
\ea \hspace{0.3cm}\eqno(FPK)$$  
where $\ell=1,\hdots, M$ and $d_\ell \in \NN \setminus \{0\}$. 
In the system above, we look for $M$ unknowns $m=(m^1,\hdots, m^{M})$ such that for each $\ell=1,\hdots, M$,  $m^\ell$ belongs to the space $C([0,T]; \P_1(\RR^{d_\ell}))$, where $\P_1(\RR^{d_\ell})$ is the set of probability measures on $\RR^{d_\ell}$ with finite first order moment. This set is endowed with the standard Monge-Kantorovic distance (see Section \ref{preliminaries}  below). The coefficients in $(FPK)$ are given by functions 
$$\displaystyle
b^{\ell}: \prod_{\ell'=1}^{M}C([0,T];\P_1(\RR^{d_{\ell'}})) \times \RR^{d_\ell} \times [0,T] \to \RR^{d_\ell}, \; \; \; \; \; a_{i,j}^{\ell}= \sum_{p=1}^{r_\ell}\sigma^{\ell}_{i,p} \cdot \sigma^{\ell}_{j,p} \hspace{0.3cm} \forall \; i, \; j=1,\dots d,
$$
where  $r_\ell \in \NN \setminus \{0\}$ and for all $p=1, \hdots, r_\ell$ 
$$\displaystyle
 \sigma^{\ell}_{i,p}: \prod_{\ell'=1}^{M}C([0,T];\P_1(\RR^{d_{\ell'}}))  \times \RR^{d_\ell} \times [0,T] \to \RR.
$$
Finally, the prescribed initial distributions $\bar{m}_0:= (\bar{m}_0^1,\hdots,\bar{m}_0^N)$ are assumed to be probability measures with finite second order moments, i.e. $\int_{\RR^{d_\ell}} |x|^2 \dd \bar{m}_0^\ell <\infty$ for all $\ell=1, \hdots, M$. Note that system $(FPK)$ is highly nonlinear because the dependence on $m$ of the coefficients $b^{\ell}$ and $a_{i,j}^{\ell}$ can be nonlocal  in time. A priori these coefficients depend of the entire trajectory $t \in [0,T] \to m(t) \in  \prod_{\ell=1}^{M} \P_1(\RR^{d_{\ell}})$.

When $M=1$, and the coefficients $b^1$ and $\sigma^1$ do not depend on $m$, the resulting equation is the classical FPK equation that describes the law of a diffusion process whose drift and volatility coefficients are given by $b^1$ and $\sigma^1$, respectively. We refer the reader to the monograph \cite{MR3443169} for a rather complete account of analytical results related to this equation and to the references in introduction of \cite{Carlini_Silva_2017} for the numerical approximation of its solutions. 

Let us now comment on the probabilistic interpretation of $(FPK)$ when $M>1$. Formally speaking, provided that for each $\ell=1,\hdots,M$, the equation 
\be\label{diffusions_indexed_by_ell}\displaystyle
\dd X_{\ell}(t) = b^{\ell}( m,X_{\ell}(t), t) \dd t  + \sum_{p=1}^{r_\ell}\sigma_{\cdot, p}^{\ell}  (m,X_{\ell}(t), t) \dd W^{\ell}_{p}(t) \; \; t\in [0,T], \; \; \; X_{\ell}(0)=X_0^\ell,
\ee
is well-posed (let us say in a weak sense),   system $(FPK)$ describes the time evolution of  the laws of  $ [0, T] \ni t  \mapsto X_{\ell}(t)\in \RR^{d_\ell}$. In \eqref{diffusions_indexed_by_ell}, the Brownian motions $\{W^{\ell}_p \; ; \; p=1, \hdots, M, \; \; m=0,\hdots, r_\ell\}$ are mutually independent and independent of $(X_0^\ell)_{\ell=1}^{M}$, where, for each $\ell$, the distribution of $X_0^\ell$ is given by $m_{0}^{\ell}$. In addition, the map $m: [0,T] \to \Pi_{\ell =1}^{M} \P_{1}(\RR^{d_{\ell }})$ is given  by $m(t)= \left(\mbox{Law}(X_1(t)), \hdots, \mbox{Law}(X_M(t))\right)$. 

Our aim in this paper is to use this probabilistic interpretation  in order to provide a convergent fully discrete scheme for $(FPK)$. The analysis of the proposed approximation, that we will present in Section \ref{fully_discrete_scheme}, is a rather straightforward extension of the study done in \cite{Carlini_Silva_2017}, where $M=1$.  On the other hand,
as we will show in the next section, it is easy to see that solutions of $(FPK)$ can be found as the marginal laws of a single FPK equation whose solution takes values in $\P_{1}(\prod_{\ell=1}^{M}\RR^{d_\ell})$ at each time. Therefore,   the scheme in \cite{Carlini_Silva_2017} could, in principle, be used  to approximate $(FPK)$.   However, from the practical point of view, this roadmap has serious difficulties because the numerical efficiency of the scheme in   \cite{Carlini_Silva_2017} depends heavily on  the dimension of the state space. In this sense, the study of a scheme that can be directly applied to system $(FPK)$ is interesting in its own right. 

We implement the scheme in two examples. In the first one we consider a diffusive version, introduced in \cite{CarLab15}, of a system of FPK equations proposed in \cite{DiFrancescoFagioli13} modelling the evolution of two interacting species under attraction and repulsion effects.  Since in \cite{CarLab15} some of the drift terms depend on the densities of the species distributions, we need to regularize these terms in order to obtain a convergent approximation in our framework. Our discretization produces  rather similar numerical results to those in \cite[Section 5.1]{BCL16}. In the second example, we consider a particular instance of a two population Mean Field Game (MFG) (see e.g. \cite{MR3333058}). The system we consider, introduced in \cite[Section 6.2.1]{AcBarCi17}, is symmetric with respect to both populations and aims to model xenophobia effects on urban settlements. In \cite{AcBarCi17} it is shown that even if at the microscopic level the xenophobic effect is small, segregation occurs at the macroscopic level, indicating  that  Schelling's principle (see \cite{Schelling1}) is also valid in the context of MFGs.  In the tests that we have implemented, we recover the numerical results in \cite{AcBarCi17} for the viscosity parameters the authors consider, but we are also able to deal with very small, or null, viscosity parameters, capturing, for these cases, different segregated configurations than those in  \cite{AcBarCi17}.  We believe that the possibility of dealing with small or null viscosity parameters, as well as large time steps, is an important feature of the scheme that we propose.

The article is organized as follows. In the next section we introduce some standard notations and  our main assumptions. In Section \ref{fully_discrete_scheme}  we introduce the scheme that we propose, which is a straightforward extension of the one in \cite{Carlini_Silva_2017}, and we study its main properties, including the convergence analysis. Finally, in Section \ref{simultions}, we present our numerical results for the two examples described in the previous paragraph. \bigskip

{\bf Acknowledgements:}  The first author acknowledges financial support by the Indam GNCS project ``Metodi numerici per equazioni iperboliche e cinetiche e applicazioni''.  The second author is partially supported by the ANR project
MFG ANR-16-CE40-0015-01 and the PEPS-INSMI Jeunes project ``Some open problems in Mean Field Games'' for the years 2016 and 2017.

Both authors acknowledge financial support by the PGMO project VarPDEMFG. 
\section{Preliminaries and main assumptions}\label{preliminaries}
Let us first set some standard notations and assumptions that we will use in the rest of the paper. For the sake of notational convenience we will assume   that $M=2$, but our results admit straightforward generalizations for arbitrary $M \in \NN$.  The set $\P_{i}(\RR^d)$ ($d$, $i \in \NN \setminus \{0\})$ denotes the set of Borel probability measures over $\RR^d$ with finite  $i$-th order moment. We endow $\P_{i}(\RR^d)$ with the standard Monge-Kantorovic metric
$$\displaystyle
d_{i}(\mu_1, \mu_2):= \inf\left\{ \left( \int_{\RR^d \times \RR^d} |x-y|^i \dd \gamma(x,y) \right)^{\frac{1}{i}} \; \big| \; \Pi_{x} \sharp \gamma = \mu_1, \; \; \Pi_{y} \sharp \gamma = \mu_2 \right\},
$$
where $\Pi_{x}(x,y):=x$, $\Pi_{y}(x,y):=y$ for all $x$, $y\in \RR^d$ and given a Borel map $\Phi: \RR^{m} \to \RR^{n}$ and a Borel measure $\mu$ on $\B(\RR^{m})$, the {\it push-forward} measure $\Phi \sharp \mu$ is defined as $\Phi \sharp \mu(A) := \mu( \Phi^{-1}(A))$. Let $\K \subseteq \P_{i}(\RR^d)$ be given. A useful compactness result in $\P_{i}(\RR^d)$  states that if for a given $\K\subseteq \P_{i}(\RR^d)$  there exists $C>0$ such that   
\be\label{condition_for_compactness}\int_{\RR^d} |x|^{i+\delta} \dd \mu(x)\leq C \qquad \mbox{for some $\delta>0$ and all $\mu \in \K$},
\ee
then $\K$ is  relatively compact (see e.g. \cite[Proposition 7.1.5]{Ambrosiogiglisav}).

Define $\M:= C([0,T];\P_1(\RR^{d_{1}}))\times  C([0,T];\P_1(\RR^{d_{2}}))$. We say that $m=(m^1,m^2) \in  \M $ is a weak solution of $(FPK)$ if for all $\ell=1,2$, $t\in [0,T]$ and $\phi \in C_{0}^{\infty}(\RR^{d_\ell})$ (the space of  $C^{\infty}$ real-valued functions defined on $\RR^{d_\ell}$ and with compact support) we have that
\be\label{solution_FP}\displaystyle
\ba{ll}
\int_{\RR^{d_\ell}} \varphi(x) \dd m^\ell(t)(x)=& \int_{\RR^{d_\ell}}  \varphi(x) \dd \bar{m}_{0}^\ell(x) + \int_{0}^{t}\int_{\RR^{d_\ell}} \left[   b^\ell(m,x,s) \cdot \nabla \varphi(x)   \right] \dd m^\ell(s)(x) \dd s \\[6pt]
\;  & + \int_{0}^{t}\int_{\RR^{d_\ell}} \left[  \half \sum_{i,j} a_{i,j}^\ell(m,x,s) \partial_{x_i, x_j}^{2} \varphi(x)\right] \dd m^\ell(s)(x) \dd s,
\ea\ee 
provided that the second and third terms in the right hand side are meaningful. 

 The main assumptions in this paper are continuity and uniform linear growths of $b^\ell$ and $\sigma^\ell$, respectively, with respect to the space variables. More precisely, we will suppose that \smallskip\\
{\bf(H)} For $\ell=1,2$ \smallskip\\
{\rm(i)} $\bar{m}_{0}^\ell \in \P_2(\RR^d)$.\smallskip\\
{\rm(ii)} The maps $b^\ell$ and $\sigma^\ell$ are continuous. \smallskip\\
{\rm(iii)} There exists $C>0$ such that  
\be\label{linear_growth}\displaystyle
|b^\ell(m,x,t)| +|\sigma^\ell(m,x,t)| \leq C(1+ |x|) \hspace{0.2cm} \forall \;  m \in \M, \; \; x\in \RR^{d_\ell}, \;  t\in [0,T].
\ee \smallskip


Note that system $(FPK)$ can be analysed with the help of a single FPK equation. Indeed,  let  $\bar{m}_{0}\in \P_2( \RR^{d_1} \times \RR^{d_2})$ be such that its   marginal  in $\RR^{d_\ell}$ ($\ell=1,2$) is given by  $\bar{m}_0^{\ell}$. Given  $\mu \in C([0,T]; \P_1(\RR^{d_1} \times \RR^{d_2}))$    denote by $\hat{\mu} :=(\mu^{1} , \mu^2) \in \M$ the marginals in $\RR^{d_1}$ and  $\RR^{d_2}$  of $t \in [0,T] \to \mu(t)\in \P_1( \RR^{d_1} \times \RR^{d_2})$. Writing    $x=(x^{1}, x^2) \in \RR^{d_1} \times \RR^{d_2}$ define the coefficients
$$ \displaystyle
\ba{l}
b: C\left([0,T];\P_{1}(\RR^{d_1} \times \RR^{d_2})\right) \times (\RR^{d_1} \times \RR^{d_2}) \times [0,T] \to   \RR^{d_1} \times \RR^{d_2}, \\[6pt]
\sigma: C\left([0,T];\P_{1}(\RR^{d_1} \times \RR^{d_2})\right) \times  (\RR^{d_1} \times \RR^{d_2}) \times [0,T] \to  \RR^{d_1 \times r_1} \times \RR^{d_2 \times r_2}.
\ea
$$
as
\be\label{redefinicion_coeficientes} \displaystyle
b(\mu,x,t)  :=  \left(b^{1}(\hat{\mu},x^{1},t), b^{2}(\hat{\mu},x^{2},t)\right), \hspace{0.3cm}
\sigma(\mu,x,t) :=  \left(\sigma^{1}(\hat{\mu},x^{1},t),   \sigma^{2}(\hat{\mu},x^{2},t)\right), 
\ee
for all $\mu \in  C\left([0,T]; \P_1(\RR^{d_1} \times \RR^{d_2})\right)$,  $x \in \RR^{d_1} \times \RR^{d_2}$ and $t\in [0,T]$. Finally, for all $\ell_1$, $\ell_2= 1, 2$ we  set
$$\displaystyle
a_{i,j}^{\ell_1,\ell_2}(\mu, x,t):=\left\{ \ba{lr}\sum_{p=1}^{d_{\ell_1}}\sigma^{\ell_1}_{i,p}(\hat{\mu},x^{\ell_1},t)\sigma^{\ell_1}_{j,p} (\hat{\mu},x^{\ell_1},t)& \mbox{if $\ell_1=\ell_2$}, \\[5pt]
0 &  \mbox{if $\ell_1 \neq \ell_2$.}
\ea \right.
$$
Consider the problem of finding $m \in C\left([0,T]; \P_1(\RR^{d_1} \times \RR^{d_2})\right)$  such that  
$$\displaystyle
\ba{rcl} \partial_{t} m   -\half \underset{\begin{subarray}{c}
   1\leq \ell_1, \ell_2 \leq 2 \\
  1\leq i, j \leq d_\ell
  \end{subarray}}\sum \partial_{x_i^{\ell_1},x_j^{\ell_2}}^2\left(a_{i,j}^{\ell_1,\ell_2}(m, x,t)m\right)+\mbox{div}\left(b(m,x,t)m\right) &=& 0  \hspace{0.5cm} \mbox{in } \; \RR^{d_1} \times \RR^{d_2}  \times [0,T], \\[4pt]
													m(0)&=&\bar{m}_{0}  \hspace{0.5cm} \mbox{in } \; \RR^{d_1} \times \RR^{d_2}.
\ea\eqno(FPK')$$ \normalsize
If {\bf(H)} holds, then the coefficients $b$ and $\sigma$, defined in \eqref{redefinicion_coeficientes},  also satisfy {\bf(H)} in the corresponding spaces. More precisely,  $b$ and $\sigma$ are continuous and there exists $C>0$ such that 
\be\label{linear_growth_1}|b(m,x,t)| +|\sigma(m,x,t)| \leq C(1+ |x|) \hspace{0.2cm} \forall \;  m \in C\left([0,T];\P_{1}(\RR^{d_1} \times \RR^{d_2})\right), \;   x\in \RR^{d_1} \times \RR^{d_2}, \;  t\in [0,T].\ee
Thus, by the results in \cite{MR3086740,MR3113428} (see also \cite[Theorem 4.2]{Carlini_Silva_2017}) we have that $(FPK')$ admits at least one solution $m \in C\left([0,T];\P_1(\RR^{d_1} \times \RR^{d_2})\right)$. Moreover, from the results in \cite{Carlini_Silva_2017} we have the existence of $C>0$ such that 
\be\label{uniform_quadratic_growth_1}
\sup_{t\in [0,T]} \int_{\RR^{d_1} \times \RR^{d_2}} |x|^{2} \dd m(t)(x) \leq C. 
\ee

Now,  for $R>0$ and $x'\in \RR^{d_2}$  we set $\xi_{R}(x'):= \xi(x'/R)$, where $\xi \in C_{0}^{\infty}(\RR^{d_2})$ is such that $0\leq \xi \leq 1$, $\xi(x')= 1$ if $|x'| \leq 1/2$ and $\xi(x')=0$ if $|x'| \geq  1$. The function $\xi_R$ belongs to $C_{0}^{\infty}(\RR^{d_2})$ and,  as $R \uparrow \infty$,  approximate the constant function equal to $1$ in $\RR^{d_2}$.  Given  $\varphi\in C_{0}^{\infty}(\RR^{d_1})$, let us define $\varphi_R^1: \RR^{d_1} \times \RR^{d_2}  \to \RR$ as $\varphi_R^1(x):= \varphi(x^{1}) \xi_{R}(x^2)$, which belongs to $C_{0}^{\infty}(\RR^{d_1} \times \RR^{d_2})$. By considering this test function in $(FPK')$, using \eqref{linear_growth_1} and \eqref{uniform_quadratic_growth_1} and letting $R\uparrow \infty$ we obtain that $m^1 \in C([0,T];  \P_1(\RR^{d_1}))$ (defined for all $t\in [0,T]$ as the marginal of $m(t)$ with respect to $\RR^{d_1}$) satisfies \eqref{solution_FP} with $\ell=1$. A similar construction shows that $m^2 \in C([0,T]; \P_1(\RR^{d_2}))$ (defined for all $t\in [0,T]$ as the marginal of $m(t)$ with respect to $\RR^{d_2}$) satisfies \eqref{solution_FP} with $\ell=2$. As a result $(m^1,m^2)$ solves $(FPK)$. 

From the analytical point of view, the argument above is useful in order to obtain existence and properties of solutions to $(FPK)$. On the other hand, as we comment in Remark \ref{numerical_inefficiency_one_fp} in the next section, this simplification is useless from the numerical point of view. 
\section{The fully discrete scheme}\label{fully_discrete_scheme}  
We consider a time step $h=T/N^T$ ( $N^T \in \NN$) and  space steps $\rho^1$, $\rho^2>0$. We define $t_k=kh$ ($k=0,\hdots, N_T$), the time grid $\{0, t_1, \hdots, t_{N^T-1}, T\}$ and the space grids  $\G_{\rho^{\ell}}:= \{ x_i^\ell= \rho^{\ell} i \; | \; i\in \ZZ^{d_\ell}\}$ ($\ell=1$, $2$). 
We consider two regular lattices $\T_{\rho^1}$ and $\T_{\rho^2}$ of $\RR^{d_{1}}$ and $\RR^{d_2}$, where the vertices of the square elements belong to $\G_{\rho^{1}}$ and $\G_{\rho^{2}}$, respectively. Associated to these lattices  and their vertices, we consider two $\mathbb{Q}_1$ bases $\{ \beta_{i}^{\ell} \; ; i\in \ZZ^{d_{\ell}} \}$ ($\ell=1$, $2$) . By definition, for $\ell=1$, $2$ and $i \in \ZZ^{d_\ell}$, the functions  $\beta_{i}^\ell:\RR^{d_\ell} \to \RR_+$ (where $\RR_+$ denotes the set of non negative real numbers) are  polynomials of degree less than or equal to 1 with respect to each variable $(x_ 1, \dots, x_{d_{\ell}})$  on each square $Q \in \T_{\rho^\ell}$,  have compact support and satisfy that $\beta_{i}^\ell(x_j^\ell)=\delta_{i,j}$ (where $\delta_{i,j}=1$ if $i=j$ and   $\delta_{i,j}=0$, otherwise)  and $\sum_{i\in \ZZ^{d_\ell}} \beta^{\ell}_{i}(x)=1$ for all $x\in \RR^{d_\ell}$. In order to define a discretization of the initial condition $\bar{m}_0^\ell$ we define the sets 
$$\displaystyle
E_{i}^\ell:= \left\{ x \in \RR^{d_\ell} \, ; \; |x-x_{i}|_{\infty} \leq \frac{\rho^\ell}{2}\right\}.
$$
Since we will let $\rho^\ell$ tend to $0$ later, without loss of generality we can assume that $\bar{m}_0^\ell(\partial E_i^\ell)=0$ for all $i\in \ZZ^{d_\ell}$. We then set 
$$\displaystyle
m_{i,0}^\ell=  \bar{m}_{0}^\ell (E_i^\ell) \hspace{0.5cm} \forall \; i \in \ZZ^{d_\ell}.
$$ 
Since $\bar{m}_0^{\ell}(\RR^{d_\ell})=1$, we have that $\left\{m_{i,0}^\ell \; | \; i \in \ZZ^{d_\ell}\right\}$ belongs to the {\it simplex} 
$$
\mathcal{S}^{\rho_\ell}:= \left\{ \mu \in [0,1]^{\ZZ^{d_\ell}} \; | \; \sum_{i\in \ZZ^{d_\ell}} \mu_i=1 \right\}. 
$$
Given $\mu= \left\{ \mu_{i,k} \; | \; i \in \ZZ^{d_\ell}, \; k=0,\hdots, N_T \right\}\in (\mathcal{S}^{\rho_\ell})^{N_T+1}$, we identify $\mu$ with an element in $C([0,T]; \P_1(\RR^{d_\ell}))$ via a linear interpolation
\be\label{linear_interpolation_extension_of_simplex}
\mu(t):= \left(\frac{t-t_{k}}{h}\right)\sum_{i\in \ZZ^{d_\ell}} \mu_{i,k+1} \delta_{x_{i}^\ell} +  \left(\frac{t_{k+1}-t}{h}\right)\sum_{i \in \ZZ^{d_\ell}} \mu_{i,k}\delta_{x_{i}^\ell} \hspace{0.4cm} \mbox{if $t\in [t_{k}, t_{k+1}[$.}
\ee
Now, we have all the elements to introduce the discretization of $(FPK)$ we consider. For the sake of clarity, we first recall the fully-discrete scheme introduced in \cite{Carlini_Silva_2017} when $M=1$. In this case the $(FPK)$ system is given by 
\be\label{fpk_M_one}\ba{rcl} \partial_{t} m   -\half \underset{1\leq i,j \leq d}\sum \partial_{x_i,x_j}^2\left(a_{i,j} (m, x,t)m \right)+\sum_{i=1}^{d}\partial_{x_i}\left(b (m,x,t)m \right) &=& 0,  \hspace{0.5cm} \mbox{in } \; \RR^{d} \times (0,T), \\[4pt]
													m(0)&=&\bar{m}_{0}  \hspace{0.5cm} \mbox{in } \; \RR^{d},
\ea
\ee
where we have omitted the superfluous index $\ell=1$. The fully discrete scheme for \eqref{fpk_M_one} reads: Find $m \in (\mathcal{S}^{\rho})^{N_T+1}$ such that
\be\label{scheme_nonlinear_stochastic_case_II}\ba{rcl}
m_{i,0} &=& \bar{m}_{0} (E_{i})  \hspace{0.4cm} \forall \; i\in \ZZ^d,  \\[6pt]
m_{i, k+1} &=& \frac{1}{2r}\sum\limits_{p=1}^{r}\sum\limits_{j \in \ZZ^{d}} \left[\beta_{i} ( \Phi_{j,k}^{p,+}[m])+\beta_{i}(\Phi_{j,k}^{p,-}[m])\right] m_{j,k} \hspace{0.4cm} \forall \; i\in \ZZ^d, \; \; k=0,\hdots, N_T-1, 
\ea \ee 
where the {\it one-step} discrete characteristics starting from $x_{j}$ at time $t_k$ are defined as
$$
\Phi_{j,k}^{p,+}[m]  :=  x_{j} + h b(m,x_{j},t_{k})+ \sqrt{r h }\sigma_{p}(m,x_{j},t_{k}), \; \; \; 
\Phi_{j,k}^{p,-}[m]  : =  x_{j} + h b(m,x_{j},t_{k})-\sqrt{r h }\sigma_{p}(m, x_{j},t_{k}),
$$
with $b$ and $\sigma_p$ being defined, as a function of $m$, through the extension \eqref{linear_interpolation_extension_of_simplex}. 

Existence of at least one solution $m^{\rho,h}$ to \eqref{scheme_nonlinear_stochastic_case_II} has been proved in  \cite[Proposition 3.1]{Carlini_Silva_2017}. Moreover, under an additional local Lipschitz assumption on $b$ and $\sigma$,  as $\rho$ and $h$ tend to $0$ and $\rho^2=o(h)$, the sequence $m^{\rho,h}$ in $C([0,T]; \P_1(\RR^d))$, defined again  through the extensions \eqref{linear_interpolation_extension_of_simplex}, has at least one limit point $m\in C([0,T]; \P_1(\RR^d))$, and every such limit point solves $(FPK)$ (see \cite[Theorem 4.1]{Carlini_Silva_2017}).
\begin{remark}\label{extension_of_the_scheme_non_regular_case}
 By regularizing the coefficients $b$ and $\sigma$ using standard mollifiers, and modifying the scheme accordingly,  this convergence result is also shown to hold under assumption {\bf(H)} only {\rm(}see \cite[Theorem 4.2]{Carlini_Silva_2017}{\rm)}. 
\end{remark}

In order to grasp the probabilistic interpretation of  \eqref{scheme_nonlinear_stochastic_case_II}, it is useful to think this problem as the one of finding a fixed point of a suitable mapping. Indeed, given $\mu \in C([0,T]; \P_1(\RR^d))$ and a solution $m[\mu]\in (\mathcal{S}^{\rho})^{N_T+1}$  to
\be\label{scheme_nonlinear_stochastic_case_II_dependent_of_mu}\ba{rcl}
m_{i,0} &=& \bar{m}_{0} (E_{i})  \hspace{0.4cm} \forall \; i\in \ZZ^d,  \\[6pt]
m_{i, k+1} &=& \frac{1}{2r}\sum\limits_{p=1}^{r}\sum\limits_{j \in \ZZ^{d}} \left[\beta_{i} ( \Phi_{j,k}^{p,+}[\mu])+\beta_{i}(\Phi_{j,k}^{p,-}[\mu])\right] m_{j,k} \hspace{0.4cm} \forall \; i\in \ZZ^d, \; \; k=0,\hdots, N_T-1, 
\ea \ee 
we can construct a probability space $(\Omega, \F, \PP)$ and Markov chain $\{X_k[\mu] \; | \; k=0,\hdots, N_T\}$, defined on it,  taking values in $\G_{\rho}$  and whose marginal laws  and  transition probabilities are  given, respectively, by $m[\mu]_{(\cdot), k}\in \mathcal{S}^{\rho}$ and  
\be\label{transition_probabilities_M_one}
\PP\left( X_{k+1}[\mu]=x_{i} \; \big| \; X_{k}[\mu]=x_{j}\right)= \frac{1}{2r}\sum\limits_{p=1}^{r}\left[\beta_{i} ( \Phi_{j,k}^{p,+}[\mu])+\beta_{i}(\Phi_{j,k}^{p,-}[\mu])\right] \hspace{0.3cm} \; \forall \; i, j \in \ZZ^{d}, \; \; k=0,\hdots,N_T-1.
\ee \normalsize
In \cite{Carlini_Silva_2017} the Markov chain defined above is shown to satisfy the {\it consistency conditions} introduced by Kushner (see e.g. \cite{KushnerDupuis}). Hence, we can expect that its marginal laws will approximate the law of a weak solution $X[\mu]$ to 
\be\label{diffusions_dependending_on_mu_M_one}
\dd X(t) = b(\mu,X(t), t) \dd t  + \sum_{p=1}^{r}\sigma_{\cdot, p}   (\mu,X(t), t) \dd W_{p}(t) \; \; t\in [0,T], \; \; \; X(0)=X_0,
\ee
where the distribution of $X_0$ is given by $\bar{m}_0$. As explained in \cite{Carlini_Silva_2017}, a solution to $(FPK)$, when $M=1$, corresponds  to a fixed point $m\in C([0,T];\P_1(\RR^d))$ of the application $C([0,T];\P_1(\RR^d)) \ni \mu \to m[\mu](\cdot) \in C([0,T];\P_1(\RR^d))$, where, for every $t \in [0,T]$, the measure $m[\mu](t)$ is defined as the law of $X[\mu](t)$. Based on this interpretation, scheme \eqref{scheme_nonlinear_stochastic_case_II} can be interpreted as the analogous fixed point problem for the approximating Markov chain $\{X_k[\mu] \; | \; k=0,\hdots, N_T\}$.

Having the previous observations in mind, the extension of scheme \eqref{scheme_nonlinear_stochastic_case_II} to the case $M=2$ is straightforward. We consider the problem of finding  $m=(m^1, m^2) \in (\mathcal{S}^{\rho^1})^{N_T+1} \times (\mathcal{S}^{\rho^2})^{N_T+1}$ such that, for $\ell=1$, $2$, we have 
\be\label{scheme_nonlinear_stochastic_case_multiple_populations}\ba{rcl}
m_{i,0}^\ell &=& \bar{m}_{0}^\ell (E_{i}^\ell)  \hspace{0.4cm} \forall \; i\in \ZZ^{d_\ell},  \\[6pt]
m_{i, k+1}^\ell &=& \frac{1}{2r_\ell}\sum\limits_{p=1}^{r_\ell}\sum\limits_{j \in \ZZ^{d_\ell}} \left[\beta_{i}^\ell ( \Phi_{j,k}^{\ell,p,+}[m])+\beta_{i}^\ell(\Phi_{j,k}^{\ell,p,-}[m])\right] m_{j,k}^\ell \hspace{0.4cm} \forall \; i\in \ZZ^{d_\ell}, \; \; k=0,\hdots, N_T-1, 
\ea \ee  
where
$$\ba{rcl}
\Phi_{j,k}^{\ell,p,+}[m]  &:=&  x_{j}^\ell + h b^\ell(m,x_{j}^\ell,t_{k})+ \sqrt{r_\ell h }\sigma_{p}(m,x_{j}^{\ell},t_{k}), \\[8pt]
\Phi_{j,k}^{\ell,p,-}[m]  &:=&  x_{j}^\ell + h b^\ell(m,x_{j}^\ell,t_{k})-\sqrt{r_\ell h }\sigma_{p}(m, x_{j}^{\ell},t_{k}).
\ea
$$
Arguing exactly as in the proof of Proposition 3.1 in \cite{Carlini_Silva_2017}, the existence of at least one solution $m_{\rho,h}$ is a consequence of {\bf(H)} and Schauder fixed-point theorem. We also point out that the scheme is conservative. Indeed, for $\ell=1,2$ and $k=0,\hdots, N_T$ we have 
$$
\sum_{i \in \ZZ^{d_\ell}} m^\ell_{i,k+1}=\sum_{j \in \ZZ^{d_\ell}}m^{\ell}_{j,k} \frac{1}{2r_\ell} \sum\limits_{p=1}^{r_\ell} \sum_{i \in \ZZ^{d_\ell}}\left[\beta_{i}^\ell ( \Phi_{j,k}^{\ell,p,+}[m])+\beta_{i}^\ell(\Phi_{j,k}^{\ell,p,-}[m])\right]=\sum_{j \in \ZZ^{d_\ell}}m^{\ell}_{j,k}=1,
$$
where the last equality follows from $\sum_{j \in \ZZ^{d_\ell}}m^{\ell}_{j,0}=1$. 
\begin{remark}\label{numerical_inefficiency_one_fp}  $\;$ \smallskip\\
{\rm(i)} As we discussed at the end of the previous section, we could approximate a solution to $(FPK)$ by first approximating a solution of $(FPK')$ and then taking its marginals with respect to $\RR^{d_1}$ and $\RR^{d_2}$. The problem of this approach is that if we use scheme \eqref{scheme_nonlinear_stochastic_case_II} in order to approximate $(FPK')$, then we should consider a discretization of $\RR^{d_1+d_2}$ instead of discretizing $\RR^{d_1}$ and $\RR^{d_2}$ separately {\rm(}as we do with scheme \eqref{scheme_nonlinear_stochastic_case_multiple_populations}{\rm)}, which affects enormously the computational time.  Of course, in our numerical experiments we must consider bounded space grids {\rm(}see the next section{\rm)}, but the same difficulty arises. \smallskip\\
{\rm(ii)} Note that if for each $(x,t) \in \RR^{d_\ell}\times [0,T]$ {\rm(}$\ell=1,2${\rm)} the functions 
$$\ba{l}C([0,T]; \P_1(\RR^{d_\ell}))^2  \ni (m^1,m^2) \mapsto b^{\ell}(m^1,m^2,x,t)\in \RR^{d_\ell} \\[6pt]
\mbox{{\rm and} } \; C([0,T]; \P_1(\RR^{d_\ell}))^2  \ni (m^1,m^2) \mapsto \sigma^{\ell}(m^1,m^2,x,t) \in \RR^{d_\ell \times r_\ell},\ea$$ 
depend on $\left\{(m_1(s),m_2(s)) \; | \; 0\leq s \leq t\right\}$, then the scheme \eqref{scheme_nonlinear_stochastic_case_multiple_populations} is explicit and, as a consequence, it  admits a unique solution. On the other hand, if  $b^{\ell}(m^1,m^2,x,t)$, or $\sigma^{\ell}(m^1,m^2,x,t)$, depends on values  $(m^1(s), m^2(s))$, for some $s\in [t,T]$, then the scheme is implicit and ad-hoc techniques should be used in order to compute a solution numerically. 
\end{remark}
\subsection{Convergence} In this section we analyse the limit behaviour of solutions $(m^1_n, m^2_n)$ to \eqref{scheme_nonlinear_stochastic_case_multiple_populations} with steps $\rho^1_n$, $\rho^2_n$ and $h_n:= 1/N_T^n$ tending to zero as $n\to \infty$. We work with the extensions, defined through \eqref{linear_interpolation_extension_of_simplex},  of $m^1_n$ and $m^2_n$ to  $C([0,T];\P_1(\RR^{d_1}))$ and  $C([0,T];\P_1(\RR^{d_2}))$, respectively. 

 The first important remark is that, as the next result shows, the sequence $(m^1_n, m^2_n)$ is equicontinuous in $C([0,T];\P_1(\RR^{d_1}))\times C([0,T];\P_1(\RR^{d_2}))$ (see \eqref{equicontinuity_of_sequence_of_solutions}) and, for each $t\in [0,T]$, we have that $(m^1_n(t), m^2_n(t))$ belongs to a fixed relatively compact subset of $\P_1(\RR^{d_1})\times \P_1(\RR^{d_2})$ (see \eqref{second_order_bounded_moments} and \eqref{condition_for_compactness}).
\begin{proposition}\label{compactness_of_the_sequence_of_solutions} Suppose that {\bf(H)} holds true and that, as $n\to \infty$, $\rho_1^n + \rho_2^n =O(h_n^2)$. Then, there exists a constant $C>0$ (independent of $n$) such that 
\begin{align} 
d_{1}(m_n^1(t), m_n^1(s)) +d_{1}(m_n^2(t), m_n^2(s))  &\leq  C \sqrt{|t-s|} \hspace{0.6cm} \forall \; t, \; s\in [0,T], \label{equicontinuity_of_sequence_of_solutions}\\
 \int_{\RR^{d_1}}|x|^2 \dd m_n^1(t)(x) + \int_{\RR^{d_2}}|x|^2 \dd m_n^2(t)(x)   &\leq  C  \hspace{0.6cm} \forall \; t \in [0,T]. \label{second_order_bounded_moments}
\end{align}
\end{proposition}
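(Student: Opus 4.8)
The plan is to exploit the probabilistic interpretation already set up before the statement: for a fixed solution $m=(m^1,m^2)$ of \eqref{scheme_nonlinear_stochastic_case_multiple_populations} the coefficients $b^\ell,\sigma^\ell$ are frozen, and for each $\ell$ one constructs the Markov chain $\{X_k^\ell\}_{k=0}^{N_T}$ on $\G_{\rho^\ell}$ with marginal laws $m_{(\cdot),k}^\ell$ and transition probabilities given by the $\beta_i^\ell(\Phi_{j,k}^{\ell,p,\pm})$. Since $\ell=1,2$ are treated identically I would suppress $\ell$. Throughout I would use three elementary facts about the $\mathbb Q_1$ basis: the partition of unity $\sum_i \beta_i(y)=1$; the exact reproduction of affine maps $\sum_i \beta_i(y)\,x_i=y$; and that $\beta_i(y)\ne 0$ only at nodes with $|x_i-y|\le C\rho$, whence $\sum_i \beta_i(y)|x_i-y|^2\le C\rho^2$. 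Combined with the linear growth \eqref{linear_growth} in \textbf{(H)}, these are the only ingredients required.

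First I would establish \eqref{second_order_bounded_moments}. Set $S_k:=\sum_i m_{i,k}|x_i|^2=\int|x|^2\,\dd m_{(\cdot),k}$. Plugging in \eqref{scheme_nonlinear_stochastic_case_multiple_populations}, exchanging the summation order and using the $\mathbb Q_1$ properties gives, for each target $\Phi$, the bound $\sum_i\beta_i(\Phi)|x_i|^2\le|\Phi|^2+C\rho^2$ (the affine reproduction kills the cross term). Averaging the identity $|\Phi^{p,+}|^2+|\Phi^{p,-}|^2=2|x_j+hb|^2+2r h|\sigma_p|^2$ over $p$ and the sign, and controlling $2h\,x_j\cdot b+h^2|b|^2+h|\sigma|^2$ by $Ch(1+|x_j|^2)$ via \eqref{linear_growth}, yields the recursion $S_{k+1}\le(1+Ch)S_k+C(h+\rho^2)$. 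A discrete Gronwall argument then gives $S_k\le e^{CT}(S_0+1)+C(\rho^2/h)$, which is uniformly bounded in $n$ because $\bar m_0^\ell\in\P_2$ controls $S_0$ and $\rho_\ell^n=O(h_n^2)$ forces $\rho^2/h=O(h^3)$. For $t$ inside a subinterval, the interpolation \eqref{linear_interpolation_extension_of_simplex} makes $\int|x|^2\,\dd m_n(t)$ a convex combination of $S_k$ and $S_{k+1}$, so the same bound persists for all $t$.

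The delicate point is the H\"older-$\tfrac12$ estimate \eqref{equicontinuity_of_sequence_of_solutions}, since a step-by-step triangle inequality only produces the useless rate $C|t-s|/\sqrt h$; the cancellation of the diffusion increments must be exploited. As $(X_k,X_{k'})$ is a coupling of $m_{(\cdot),k}$ and $m_{(\cdot),k'}$, one has $d_1(m_n(t_k),m_n(t_{k'}))\le(\EE|X_{k'}-X_k|^2)^{1/2}$, so it suffices to prove $\EE|X_{k'}-X_k|^2\le C|t_{k'}-t_k|$. Writing the increments $D_i=X_{i+1}-X_i$, the affine reproduction gives $\EE[D_i\mid X_i=x_j]=h\,b(m,x_j,t_i)$, so the $\sigma$-part is conditionally centred. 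I would therefore split $X_{k'}-X_k=M_{k'}+A_{k'}$ into the martingale $M_{k'}=\sum_i(D_i-\EE[D_i\mid\mathcal F_i])$ and the drift $A_{k'}=\sum_i\EE[D_i\mid\mathcal F_i]$. Orthogonality of the martingale increments together with the one-step bound $\EE|D_i|^2\le C(h+\rho^2)\le Ch$ gives $\EE|M_{k'}|^2=\sum_{i=k}^{k'-1}\EE|\Delta M_i|^2\le C(k'-k)h=C|t_{k'}-t_k|$, while Cauchy--Schwarz applied to $|A_{k'}|\le\sum_i hC(1+|X_i|)$ and the uniform second moment give $\EE|A_{k'}|^2\le C(k'-k)^2h^2\le CT|t_{k'}-t_k|$. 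Adding these proves the claim at grid times.

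Finally I would pass to arbitrary $t,s$. Coupling the two convex combinations in \eqref{linear_interpolation_extension_of_simplex} shows that $\lambda\mapsto\lambda\,\mu_{k+1}+(1-\lambda)\mu_k$ is Lipschitz with constant $d_1(m_{(\cdot),k},m_{(\cdot),k+1})\le C\sqrt h$, so for $t,s$ in one subinterval $d_1(m_n(t),m_n(s))\le\frac{|t-s|}{h}C\sqrt h\le C\sqrt{|t-s|}$ because $|t-s|\le h$; chaining this with the grid-time estimate and $\sqrt x+\sqrt y\le\sqrt 2\sqrt{x+y}$ covers all $t,s\in[0,T]$. The same computations apply verbatim to $\ell=2$, and summing the two indices yields \eqref{equicontinuity_of_sequence_of_solutions} and \eqref{second_order_bounded_moments}. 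The main obstacle is exactly the square-root (rather than linear) time modulus: it hinges entirely on the exact reproduction of affine functions by the $\mathbb Q_1$ interpolation, which is what renders the diffusion increments conditionally centred and lets their variance, not their magnitude, accumulate.
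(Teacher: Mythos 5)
Your proof is correct and follows essentially the same route as the paper: the paper omits the argument, deferring to Propositions 4.1 and 4.2 of \cite{Carlini_Silva_2017}, and those proofs rest on exactly the ingredients you use --- the Markov-chain interpretation with conditional moments computed from the $\mathbb{Q}_1$ basis (exact affine reproduction and local support), a discrete Gronwall recursion for the uniform second moments, and the martingale/drift splitting whose increment orthogonality yields the H\"older-$\tfrac12$ modulus at grid times, extended to arbitrary times via the linear interpolation \eqref{linear_interpolation_extension_of_simplex}. Nothing essential is missing, and your ordering (moment bound first, then equicontinuity) is the natural logical dependency.
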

The proofs of \eqref{equicontinuity_of_sequence_of_solutions} and \eqref{second_order_bounded_moments} are analogous to the proofs of \cite[Proposition 4.1]{Carlini_Silva_2017} and \cite[Proposition 4.2]{Carlini_Silva_2017}, respectively, and will therefore be omitted. As a consequence of the previous result and the Arzel\`a-Ascoli theorem, there exists at least one limit point $(m^1,m^2) \in C([0,T];\P_1(\RR^{d_1}))\times C([0,T];\P_1(\RR^{d_2}))$ of $(m^1_n,m^2_n)$. 
In order to prove that any limit point of $(m^1_n, m^2_n)$ solves $(FPK)$, we will assume in addition \smallskip\\
{\bf(Lip)} For $\ell=1$, $2$,  $\mu \in \M$  and compact set $K_\ell  \subseteq \RR^{d_\ell}$,   there exists  $C_{\ell}=C(\mu,K_{\ell})>0$ such that
$$
|b^\ell(\mu,y,t)-b^\ell(\mu, x,t)| +|\sigma^\ell(\mu,y,t)-\sigma^\ell(\mu, x,t)| \leq C_\ell |y-x|  \hspace{0.2cm} \forall  \;  x, \; y \in  K_\ell, \; \;   t\in [0,T].
$$ \vspace{0.000000000001cm}
\begin{theorem}\label{convergence_result} Suppose that {\bf(H)}-{\bf(Lip)}  hold true and that, as $n\to \infty$, $\rho_1^n + \rho_2^n =o(h_n^2)$. Then, every limit point $(m^1,m^2)$ of $(m^1_n,m^2_n)$ {\rm(}there exists at least one{\rm)} solves $(FPK)$.
\end{theorem}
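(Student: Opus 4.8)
The plan is to follow the single-population argument of \cite{Carlini_Silva_2017}, passing to the limit in the scheme \eqref{scheme_nonlinear_stochastic_case_multiple_populations} tested against a fixed smooth function. By Proposition \ref{compactness_of_the_sequence_of_solutions} and the Arzel\`a--Ascoli theorem we may extract a subsequence (not relabelled) along which $(m_n^1,m_n^2)\to(m^1,m^2)$ in $\M$; the task is to verify that this limit satisfies \eqref{solution_FP} for $\ell=1,2$. Fix $\ell$ and $\varphi\in C_0^\infty(\RR^{d_\ell})$. I would first record that, by the extension \eqref{linear_interpolation_extension_of_simplex}, at the nodes $t_k$ the measure $m_n^\ell(t_k)$ coincides exactly with the atomic measure $\sum_i m_{i,k}^\ell\delta_{x_i^\ell}$, so that $\int\varphi\,\dd m_n^\ell(t_k)=\sum_i\varphi(x_i^\ell)m_{i,k}^\ell$ with no spatial error.

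Next I would derive a discrete weak formulation. Multiplying the update in \eqref{scheme_nonlinear_stochastic_case_multiple_populations} by $\varphi(x_i^\ell)$, summing over $i$, and using the reproduction property of the $\mathbb{Q}_1$ basis, $\sum_i\varphi(x_i^\ell)\beta_i^\ell(y)=\varphi(y)+O((\rho^\ell)^2\|D^2\varphi\|_\infty)$, replaces the interpolation by point evaluation of $\varphi$ at the characteristics. A Taylor expansion around $x_j^\ell$, followed by averaging over the symmetric pair $\pm$ (which annihilates the terms odd in $\sigma^\ell$, i.e. the $O(\sqrt h)$ contributions), gives
\[
\frac{1}{2r_\ell}\sum_{p=1}^{r_\ell}\big[\varphi(\Phi_{j,k}^{\ell,p,+})+\varphi(\Phi_{j,k}^{\ell,p,-})\big]
=\varphi(x_j^\ell)+h\Big[b^\ell\cdot\nabla\varphi+\half\sum_{\alpha,\beta}a_{\alpha,\beta}^\ell\,\partial_{x_\alpha,x_\beta}^2\varphi\Big](x_j^\ell)+\mathrm{Rem}_{j,k},
\]
with all coefficients evaluated at $(m_n,x_j^\ell,t_k)$. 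Telescoping over $k$ yields the discrete analogue of \eqref{solution_FP},
\[
\int\varphi\,\dd m_n^\ell(t_{K})=\int\varphi\,\dd\bar{m}_0^\ell+h\sum_{k=0}^{K-1}G_n^\ell(t_k)+\mathcal E_n,
\]
where $G_n^\ell(s):=\int_{\RR^{d_\ell}}\big[b^\ell(m_n,x,s)\cdot\nabla\varphi(x)+\half\sum_{\alpha,\beta}a_{\alpha,\beta}^\ell(m_n,x,s)\,\partial_{x_\alpha,x_\beta}^2\varphi(x)\big]\,\dd m_n^\ell(s)(x)$ and $\mathcal E_n$ gathers the interpolation error, of order $(\rho^\ell)^2 N_T=O((\rho^\ell)^2/h)$, together with the Taylor remainder $\sum_{j,k}m_{j,k}^\ell\,\mathrm{Rem}_{j,k}$, which is $O(h^2)$ per step because $\nabla\varphi$ and $D^2\varphi$ have compact support and $b^\ell,\sigma^\ell$ are bounded thereon by \eqref{linear_growth}, so that the sum over $k$ contributes $O(h)$; the second-moment bound \eqref{second_order_bounded_moments} is what rules out spurious contributions from nodes $x_j^\ell$ far from $\supp\varphi$. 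Hence $\mathcal E_n\to0$ as soon as $\rho_1^n+\rho_2^n=o(h_n^2)$. Assumption {\bf(Lip)} enters here, exactly as in \cite{Carlini_Silva_2017}, to control this remainder.

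Finally I would let $n\to\infty$. The left-hand side of the discrete identity tends to $\int\varphi\,\dd m^\ell(t)$ because $m_n^\ell\to m^\ell$ in $C([0,T];\P_1(\RR^{d_\ell}))$ and $t_{K_n}\to t$, while the discretized initial datum converges narrowly to $\bar{m}_0^\ell$ as $\rho^\ell\to0$. For the main term I would write $h\sum_k G_n^\ell(t_k)=\int_0^{t_{K_n}}\bar G_n^\ell(s)\,\dd s$, with $\bar G_n^\ell$ the piecewise-constant interpolation of the values $G_n^\ell(t_k)$, and establish the pointwise limit $\bar G_n^\ell(s)\to G^\ell(s)$ for a.e. $s$. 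This rests on two facts: since $m_n\to m$ in $\M$ and $t_{k(n,s)}\to s$, the joint continuity {\bf(H)}(ii) upgrades---by uniform continuity on the compact set $\big(\{m_n\}\cup\{m\}\big)\times\supp\nabla\varphi\times[0,T]$---to uniform convergence on $\supp\nabla\varphi$ of $b^\ell(m_n,\cdot,t_{k(n,s)})$ to $b^\ell(m,\cdot,s)$, and similarly for $\sigma^\ell$; and $m_n^\ell(t_{k(n,s)})\to m^\ell(s)$ narrowly. As the integrands are uniformly bounded with fixed compact support, dominated convergence gives $\int_0^{t}\bar G_n^\ell\to\int_0^{t}G^\ell$. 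Combining the three limits yields \eqref{solution_FP} for $\ell=1$, and symmetrically for $\ell=2$, so $(m^1,m^2)$ solves $(FPK)$.

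The main obstacle is the passage to the limit in the nonlinear, nonlocal-in-time coefficients: since $b^\ell(m_n,\cdot,\cdot)$ and $\sigma^\ell(m_n,\cdot,\cdot)$ depend on the entire trajectory $m_n$ rather than on a single time marginal, narrow convergence of one time slice is insufficient. The point that makes the argument work is the uniform-in-time convergence $m_n\to m$ in the product space $\M$ provided by Proposition \ref{compactness_of_the_sequence_of_solutions}, combined with the joint continuity {\bf(H)}(ii), which together force the coefficient functions to converge uniformly on the compact support of the derivatives of $\varphi$. The remaining work---verifying that the interpolation and Taylor remainders vanish under $\rho_1^n+\rho_2^n=o(h_n^2)$---is routine bookkeeping, identical in spirit to the single-population estimates of \cite{Carlini_Silva_2017}; because the diffusion does not couple the components ($a^{\ell_1,\ell_2}=0$ for $\ell_1\neq\ell_2$), the $M=2$ proof is in essence two copies of the $M=1$ one, linked only through the arguments of the coefficients.
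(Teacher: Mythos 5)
Your proposal is correct, and its skeleton is the same as the paper's: subsequence extraction via Proposition \ref{compactness_of_the_sequence_of_solutions} and Arzel\`a--Ascoli, the telescoping identity \eqref{sumatelescopica}, replacement of the $\mathbb{Q}_1$ interpolant of $\varphi$ by $\varphi$ itself at cost $O((\rho_n^\ell)^2)$ per step, the symmetric Taylor expansion that cancels the odd $O(\sqrt{h_n})$ terms and produces the drift and diffusion terms plus an $O(h_n^2)$ remainder, and a limit passage resting on the uniform convergence of $b^\ell(m_n,\cdot,\cdot)$ and $a^\ell_{i,j}(m_n,\cdot,\cdot)$ on $\supp\varphi\times[0,T]$, which follows from {\bf(H)}(ii) and the compactness of $\{(m_n^1,m_n^2)\}_n\cup\{(m^1,m^2)\}$ in $\M$.

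The one point where you deviate from the paper is the very last step, and it is also where you misplace an assumption. In the paper (following \cite[Theorem 4.1]{Carlini_Silva_2017}), {\bf(Lip)} is used precisely in the passage to the limit: one replaces $m_n^\ell(t_k)$ by the limit measures inside the integrals and controls the error, \emph{uniformly in $k$}, by $\sup_t d_1(m_n^\ell(t),m^\ell(t))$ via Kantorovich--Rubinstein duality, which requires the integrand $\nabla\varphi\cdot b^\ell(m,\cdot,t)+\tfrac{1}{2}\sum_{i,j} a^\ell_{i,j}(m,\cdot,t)\,\partial^2_{x_i,x_j}\varphi$ to be Lipschitz on a compact set; that is exactly what {\bf(Lip)} provides. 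Your route instead establishes pointwise-in-$s$ convergence $\bar G_n^\ell(s)\to G^\ell(s)$ (uniform convergence of the coefficients plus narrow convergence of $m_n^\ell(t_{k(n,s)})$ to $m^\ell(s)$ tested against a fixed continuous compactly supported function) and concludes by dominated convergence; this is a legitimate alternative, and it never uses {\bf(Lip)} at all. Consequently your sentence ``Assumption {\bf(Lip)} enters here to control this remainder'' is a misattribution twice over: the interpolation and Taylor remainders need only boundedness of $b^\ell,\sigma^\ell$ on a compact neighbourhood of $\supp\varphi$ (which follows from {\bf(H)} alone), and your own limit argument does not invoke {\bf(Lip)} either. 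This costs you nothing, since {\bf(Lip)} is a standing hypothesis, but the sentence should be deleted or corrected. A last small inaccuracy: what rules out contributions from nodes far from $\supp\varphi$ is not the second-moment bound \eqref{second_order_bounded_moments} but the localization coming from \eqref{linear_growth}: since $|\Phi_{j,k}^{\ell,p,\pm}[m_n]-x_j^\ell|\le C\sqrt{h_n}\,(1+|x_j^\ell|)$, for $h_n$ small only nodes $x_j^\ell$ lying in a fixed compact neighbourhood of $\supp\varphi$ can send their one-step characteristics, or the segments joining them to their images, into $\supp\varphi$.
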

\begin{proof} The proof is analogous to the proof of \cite[Theorem 4.1]{Carlini_Silva_2017} and so we only sketch the main steps. Let $(m^1,m^2)  \in C([0,T];\P_1(\RR^{d_1}))\times C([0,T];\P_1(\RR^{d_2}))$ be a limit point of $(m^1_n, m^2_n)$ and consider a subsequence, still  labelled by $n$, such that $(m^1_n, m^2_n) \to (m^1,m^2)$ as $n\to \infty$. Then, for any $t\in [0,T]$ and $\varphi \in C_{0}^{\infty}(\RR^{d_\ell})$ ($\ell=1$, $2$) we have
\be\label{sumatelescopica}
\int_{\RR^{d_\ell}} \varphi(x) \dd m^{\ell}_{n}(t_{n'})(x)= \int_{\RR^{d_\ell}} \varphi(x) \dd m^{\ell}_n(0)(x)+  \sum_{k=0}^{n'-1} \int_{\RR^{d_\ell}} \varphi(x) \dd \left[m^{\ell}_n(t_{k+1})-m_n^{\ell}(t_{k})\right](x),
\ee
where $n' \in \{0, \hdots, N_{T}^{n}\}$ is such that $t_{n'}=n'h_n \to t$. Using \eqref{scheme_nonlinear_stochastic_case_multiple_populations}, 
we obtain that 
$$\ba{rcl}
\int_{\RR^{d_\ell}} \varphi(x) \dd  m^{\ell}_n(t_{k+1})(x)&=& \sum_{i \in \ZZ^{d_\ell}} \varphi(x_i)m^{\ell}_{k+1,i}\\[10pt]
\; &=&\sum_{i \in \ZZ^{d_\ell}} \varphi(x_i)\frac{1}{2r_\ell}\sum\limits_{p=1}^{r_\ell}\sum\limits_{j \in \ZZ^{d_\ell}} \left[\beta_{i}^\ell ( \Phi_{j,k}^{\ell,p,+}[m_n])+\beta_{i}^\ell(\Phi_{j,k}^{\ell,p,-}[m_n])\right] m_{j,k}^\ell\\[10pt]
\: &=& \sum\limits_{j \in \ZZ^{d_\ell}} \frac{m_{j,k}^\ell}{2r_\ell}\sum\limits_{p=1}^{r_\ell} \left[I[\varphi](\Phi_{j,k}^{\ell,p,+}[m_n])+I[\varphi](\Phi_{j,k}^{\ell,p,-}[m_n])\right]\\[10pt]
\: &=&\sum\limits_{j \in \ZZ^{d_\ell}} \frac{m_{j,k}^\ell}{2r_\ell}\sum\limits_{p=1}^{r_\ell} \left[ \varphi\left(\Phi_{j,k}^{\ell,p,+}[m_n]\right)+ \varphi \left(\Phi_{j,k}^{\ell,p,-}[m_n]\right)\right] + O((\rho_n^\ell)^2),
\ea
$$
where in the last equality we have used that $\sup_{x\in \RR^{d_\ell}} |I[\varphi](x)-\varphi(x)|= O((\rho_n^\ell)^2)$. By a Taylor expansion, we obtain 
$$\ba{rcl}
\varphi\left(\Phi_{j,k}^{\ell,p,+}[m_n]\right)+ \varphi \left(\Phi_{j,k}^{\ell,p,-}[m_n]\right)&=& 2\phi(x_j) + 2 h_n\nabla \varphi(x_j) \cdot b^\ell(m_n^1,m_n^2, x_j,t_k)\\[10pt]
\; & \; & + r_{\ell}h_n \sum\limits_{1\leq i', j' \leq d_\ell} \partial_{x_{i'},x_{i'}}\varphi(x_j) \sigma_{i',p}^\ell \sigma_{j',p}^\ell\\[12pt]
\; & \; & + O(h_n^2),
\ea
$$
where we have omitted the dependence of $\sigma_{i',p}^\ell$ and $\sigma_{j',p}^\ell$ on $(m_n^1,m_n^2, x_j,t_k)$. This implies that 
$$\ba{rcl}
\frac{1}{2r_\ell}\sum\limits_{p=1}^{r_\ell} \left[ \varphi\left(\Phi_{j,k}^{\ell,p,+}[m_n]\right)+ \varphi \left(\Phi_{j,k}^{\ell,p,-}[m_n]\right)\right]&=&\phi(x_j)+ h_n\nabla \varphi(x_j) \cdot b^\ell(m_n^1,m_n^2, x_j,t_k) \\[6pt]
\; & \; & + \frac{h_n}{2}  \sum\limits_{1\leq i', j' \leq d_\ell} \partial_{x_{i'},x_{i'}}\varphi(x_j)a_{i',j'}^\ell(m_n^1,m_n^2, x_j,t_k) \\[13pt]
\; & \; & + O(h_n^2).
\ea$$
Thus, using \eqref{sumatelescopica}, we obtain 
$$\ba{l}
\int_{\RR^{d_\ell}} \varphi(x) \dd m^{\ell}_{n}(t_{n'})(x)= \int_{\RR^{d_\ell}} \varphi(x) \dd m^{\ell}_n(0)(x)\\[10pt]
+ h_n\sum_{k=0}^{n'-1} \int_{\RR^{d_\ell}}\left[\nabla \varphi(x) \cdot b^\ell(m_n^1,m_n^2, x,t_k)+\frac{h_n}{2}  \sum\limits_{1\leq i, j\leq d_\ell} \partial_{x_{i},x_{i}}\varphi(x)a_{i,j}^\ell(m_n^1,m_n^2, x,t_k)\right]\dd m_n^\ell(t_k) \\[14pt]
+O\left(h_n + \frac{(\rho_n^\ell)^2}{h_n}\right).
\ea
$$
Finally, using that $m_n^\ell \to m^\ell  \in C([0,T]; \P_1(\RR^{d_\ell}))$ by {\bf(H)} we have that $b^\ell(m_n^1,m_n^2, \cdot,\cdot) \to b^\ell(m^1,m^2, \cdot,\cdot)$ and $a_{i,j}^\ell(m_n^1,m_n^2, \cdot,\cdot) \to a_{i,j}^\ell(m^1,m^2, \cdot,\cdot)$ uniformly in $\mbox{supp}(\varphi) \times [0,T]$ (where $\mbox{supp}(\varphi)$ denotes the support of $\varphi$, which is a compact set). Using this fact and assumption {\bf(Lip)}, we can argue in the same manner than in \cite[Theorem 4.1]{Carlini_Silva_2017} and   pass to the limit in the expression above to obtain  that $m^\ell$ satisfies \eqref{solution_FP}. The result follows. 
\end{proof}
\begin{remark}
As in \cite[Theorem 4.2]{Carlini_Silva_2017}, we can get rid of assumption {\bf(Lip)} at the price of regularizing by convolution the coefficients $b^\ell$ and $\sigma^\ell$ and considering the associated scheme with the regularized coefficients.
\end{remark}
In practice we have not always access to the coefficients $b^\ell$ and $a_{i,j}^\ell$ and they have to be approximated. As we will see in the next section, this is the case of multi-population MFGs systems. Consider a sequence of space steps $\rho^1_n$, $\rho^2_n$ and a sequence of time steps $h_n$ satisfying the assumptions of the previous result. Assume that for each $n$ we have  
$$\ba{rcl} b^\ell_n: C([0,T]; \P_1(\RR^{d_\ell})) \times \RR^{d_\ell}\times [0,T] &\to& \RR^{d_\ell}, \\[8pt]
 \sigma^\ell_n: C([0,T]; \P_1(\RR^{d_\ell})) \times \RR^{d_\ell}\times [0,T] &\to& \RR^{d_\ell \times r_\ell},
\ea
$$such that:\smallskip\\
{\bf(H')}
{\rm(i)} for each fixed $t\in [0,T]$, the mappings $b^\ell_n(\cdot, \cdot, t)$ and $\sigma^\ell_n(\cdot, \cdot, t)$ are continuous. \smallskip\\
{\rm(ii)}  the growth condition \eqref{linear_growth_1} holds for a constant $C>0$ independent of $n$. \smallskip\\
{\rm(iii)} for any sequence $\mu_n \in  C([0,T]; \P_1(\RR^{d_\ell}))$ and $\mu \in C([0,T]; \P_1(\RR^{d_\ell}))$ satisfying that $\mu_n \to \mu$ we have  
$$
b^\ell_n(\mu_n, \cdot, \cdot) \to b^\ell(\mu, \cdot, \cdot), \hspace{0.5cm } \sigma^\ell_n(\mu_n, \cdot, \cdot) \to \sigma^\ell(\mu, \cdot, \cdot)
$$
uniformly on compact subsets of $\RR^{d_\ell} \times [0,T]$.  \medskip

Consider the scheme \eqref{scheme_nonlinear_stochastic_case_multiple_populations} constructed with discrete characteristics
$$\ba{rcl}
(\Phi_{j,k}^{\ell,p,+})_n[m]  &:=&  x_{j} + h b_n^\ell(m,x_{j},t_{k})+ \sqrt{r h }(\sigma_n^\ell)_{p}(m,x_{j},t_{k}), \\[8pt]
(\Phi_{j,k}^{\ell, p,-})_n[m]  &: =&  x_{j} + h b_n^\ell(m,x_{j},t_{k})-\sqrt{r h }(\sigma_n^\ell)_p(m, x_{j},t_{k}),
\ea
$$
which, by similar arguments to those in the case of coefficients independent of $n$, admits at least one solution $(m_n^1, m_n^2)$. 
Then, we have the following result, whose proof is analogous to the proof of Theorem \ref{convergence_result}.
\begin{theorem}\label{convergence_with_coefficient_approximation}
Under {\bf(H)}-{\bf(Lip)} and the previous assumptions, the sequence $(m_n^1, m_n^2)$ admits at least one limit point $(m^1, m^2) \in  C([0,T]; \P_1(\RR^{d_1})) \times  C([0,T]; \P_1(\RR^{d_2}))$. Moreover, every such limit point solves $(FPK)$. 
\end{theorem}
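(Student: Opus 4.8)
The plan is to follow the three-step structure of the proof of Theorem \ref{convergence_result}, modifying only the passage to the limit so as to exploit assumption {\bf(H')} in place of the plain continuity of fixed coefficients. First I would secure the existence of a limit point. The proof of Proposition \ref{compactness_of_the_sequence_of_solutions} relies only on the uniform linear growth of the coefficients entering the discrete characteristics, and {\bf(H')}(ii) guarantees the bound \eqref{linear_growth_1} with a constant $C$ independent of $n$; hence the equicontinuity estimate \eqref{equicontinuity_of_sequence_of_solutions} and the second-moment bound \eqref{second_order_bounded_moments} remain valid for the solutions $(m_n^1,m_n^2)$ built with $b_n^\ell$, $\sigma_n^\ell$. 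The Arzel\`a--Ascoli theorem then yields at least one limit point $(m^1,m^2)\in C([0,T];\P_1(\RR^{d_1}))\times C([0,T];\P_1(\RR^{d_2}))$, and I fix a subsequence (still labelled by $n$) converging to it.

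Second, I would reproduce verbatim the computation in the proof of Theorem \ref{convergence_result}: starting from the telescopic identity \eqref{sumatelescopica}, inserting the scheme \eqref{scheme_nonlinear_stochastic_case_multiple_populations} with the characteristics $(\Phi_{j,k}^{\ell,p,\pm})_n$, and Taylor-expanding $\varphi$ around each grid point $x_j$. Since $\varphi\in C_0^\infty$ and the growth bound of {\bf(H')}(ii) is uniform in $n$, the Taylor remainder is $O(h_n^2)$ and the interpolation error is $O((\rho_n^\ell)^2)$, both uniform in $n$. This produces the same discrete weak formulation as before, except that $b^\ell$ and $a_{i,j}^\ell$ are replaced by $b_n^\ell$ and the matrix assembled from $\sigma_n^\ell$; the accumulated error is of order $h_n+(\rho_n^\ell)^2/h_n$, which tends to zero under $\rho_1^n+\rho_2^n=o(h_n^2)$.

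Third, and this is the only genuinely new point, I would pass to the limit using {\bf(H')}(iii) in place of the continuity argument of the base case. Because $m_n^\ell\to m^\ell$ in $C([0,T];\P_1(\RR^{d_\ell}))$, assumption {\bf(H')}(iii) gives $b_n^\ell(m_n^1,m_n^2,\cdot,\cdot)\to b^\ell(m^1,m^2,\cdot,\cdot)$ and $\sigma_n^\ell(m_n^1,m_n^2,\cdot,\cdot)\to\sigma^\ell(m^1,m^2,\cdot,\cdot)$ uniformly on the compact set $\supp(\varphi)\times[0,T]$; since these sequences are uniformly bounded there by {\bf(H')}(ii), the products defining the approximate diffusion coefficients converge uniformly to $a_{i,j}^\ell(m^1,m^2,\cdot,\cdot)$ on that set as well. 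Combined with the weak convergence $m_n^\ell(s)\rightharpoonup m^\ell(s)$ and the equicontinuity from Step 1, this lets me replace the discrete-in-time Riemann sums by the space--time integrals $\int_0^t\int\nabla\varphi\cdot b^\ell(m^1,m^2,\cdot,s)\,\dd m^\ell(s)\,\dd s$ and the analogous diffusion term, exactly as in \cite[Theorem 4.1]{Carlini_Silva_2017}, where {\bf(Lip)} is invoked to control the spatial variation of the coefficients. Passing to the limit then yields \eqref{solution_FP} for each $\ell=1,2$, so $(m^1,m^2)$ solves $(FPK)$.

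I expect the main obstacle to be the simultaneous limit in the integrals $h_n\sum_k\int\nabla\varphi\cdot b_n^\ell(m_n^1,m_n^2,\cdot,t_k)\,\dd m_n^\ell(t_k)$, where both integrand and measure depend on $n$: one must combine the uniform convergence of the coefficients from {\bf(H')}(iii) with the weak convergence of the measures, verifying that the joint limit is the expected space--time integral and that the supports stay within a fixed compact set carrying $\supp(\varphi)$. All remaining estimates are identical to those of Theorem \ref{convergence_result}.
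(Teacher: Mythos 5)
Your proposal is correct and follows essentially the same route as the paper, whose proof of Theorem \ref{convergence_with_coefficient_approximation} is simply declared ``analogous to the proof of Theorem \ref{convergence_result}'': you correctly identify that the compactness estimates survive because the growth bound in {\bf(H')}(ii) is uniform in $n$, that the Taylor-expansion computation is unchanged, and that the only real modification is substituting {\bf(H')}(iii) for plain continuity when passing to the limit in the coefficients.
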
 
\section{Simulations}\label{simultions}
We show the performance of our scheme  by applying it to approximate the solution of two instances of $(FPK)$ with $M=2$. In the first example we consider a variation of a PDE system treated analytically in \cite{CarLab15} and numerically in \cite{BCL16}, which models the evolution of two interacting species. In our framework,  the drifts $b^1$ and $b^2$ have  non local cross interaction terms and also a term that will approximate a nonlinear diffusion term present in \cite{CarLab15,BCL16}. In the second example, we consider a particular instance of a two population MFG system modelling segregation (see e.g. \cite{AcBarCi17,MR3333058}). As discussed in \cite{Carlini_Silva_2017}, standard MFGs can be seen as a  particular   $(FPK)$ equation with $M=1$, where the drift term $b^1$ satisfies that for each $(x,t)\in \RR^{d_1} \times [0,T]$ the function  $C([0,T]; \P_1(\RR^{d_1})) \ni m \mapsto  b^1(m,x,t)\in \RR^{d_1}$ depends on the values $\{m(s) \; | \; s \in (t,T]\}$. When $M\neq 1$, the situation is similar and hence, as explained in Remark \ref{numerical_inefficiency_one_fp}{\rm(ii)}, the scheme is implicit. 
 

Since the scheme \eqref{scheme_nonlinear_stochastic_case_multiple_populations} is defined on the unbounded space grid $\G_{\rho}$, in our numerical examples we need to change this grid to a bounded one. In order to maintain the total mass constant, we impose homogeneous Neumann boundary conditions  and near the boundary we approximate the  discrete flow by using a projected Euler scheme, as  proposed in  \cite{Costantini98}. The proof of convergence of the modified scheme is postponed to a future work. 

In all tests that  we chose the discretization parameters  $(\rho,h)$ satisfying $h=O(\rho^{3/2})$, which is less restrictive than the classical parabolic CFL condition for explicit finite difference schemes. Larger time step would produce loss of accuracy close to the boundary.  The question on how to modify the scheme  at the boundary maintaining  large time steps will also be addressed in a future work.

 In the examples that we present below, at each time $t\in [0,T]$ the solution $(m^1,m^2)$ is shown to admit a density with respect to the Lebesgue measure.   For each ${\ell}=1,2$  we approximate the density of $m^\ell$ by defining  $\mathbf{m}^{\ell}_{\rho,h}(x,t):=m^{\ell}_{i,k}/\rho^{d_{\ell}}$ if $(x,t)\in E^{\ell}_i\times  [t_k,t_{k+1})$. For fixed $t$,  $\mathbf{m}^{\ell}_{\rho,h}$ is a density which is  uniform on each $E_i^{\ell}$.
\subsection{Interacting species}
We consider a system of two interacting species proposed first in the first order case in \cite{DiFrancescoFagioli13} and then extended in \cite{CarLab15} to the case where a nonlinear diffusion term is also added to the system. The densities  $m^1$ and $m^2$ of the two species  are coupled through the drift by non local terms. The system studied in  \cite{CarLab15} reads 
\be\label{species}
\begin{cases}
	\partial_t  m^1 -\mbox{div}\left( m^1\left(\nabla E'(m^1)+\nabla U_1( m^1,m^2,  x,t ) \right)\right)=0, \\[6pt]	
	\partial_t  m^2 - \mbox{div}\left( m^2 \left(\nabla E'(m^2)+\nabla U_2(m^1,m^2, x,t) \right)\right)=0, \\[6pt]
	m^{1}(\cdot, 0)= m_0^1(\cdot), \hspace{0.5cm} m^{2}(\cdot, 0)= m_0^2(\cdot).
\end{cases}
\ee
In \eqref{species}, $m_0^\ell$ ($\ell=1,2$) represent two absolutely continuous probability measures whose densities are still denoted by $m_0^\ell$. The term  $E(m):=\frac{1}{2}m^3$ corresponds to an internal energy which introduces the nonlinear diffusion term $-\mbox{div}( m^\ell(\nabla E'(m^\ell))=-\Delta (m^\ell)^3$ in \eqref{species}. It is assumed that $\int_{\RR^d}(m_{0}^{\ell}(x))^3 \dd x <+\infty$ for $\ell=1$, $2$.
 The potentials $U_1$, $U_2: C([0,T]; \P_{1}(\RR^d))^2 \times \RR^d \times [0,T] \to \RR$ are cross interactions terms and they are given by convolution with smooth functions
 $$\ba{rcl}
 U_1( m^1,m^2,x,t) &=&W_{11}\ast  \left[m^1(t)\right](x)+W_{21}\ast  [m^2(t)](x), \\[6pt] 
  U_2(m^1,m^2,x,t) &=&W_{12}\ast  [m^1(t)](x)+W_{22}\ast [m^2(t)](x),
\ea
 $$
where $\ast$ denotes the space convolution and  $W_{11}(x)=W_{21}(x)=W_{22}(x):=\frac{|x|^2}{2}$, 
 $W_{12}(x):=\frac{-|x|^2}{2}$. With these choices, the drift terms 
\be\label{first_coefficients_W}-\nabla\left(W_{11}\ast m^1(t)\right)(x)= \int_{\RR^2}(y-x)\dd m^1(t)(y), \; \; \; -\nabla \left(W_{22}\ast m^2(t)\right)(x)=\int_{\RR^2}(y-x)\dd m^2(t)(y)\ee
model self-interactions for the first and second species, respectively, whereas the terms
\be\label{second_coefficients_W}- \nabla \left(W_{21}\ast m^2(t)\right)(x)= \int_{\RR^2}(y-x)\dd m^2(t)(y), \; \; \; - \nabla \left(W_{12}\ast m^1(t)\right)(x)= -\int_{\RR^2}(y-x)\dd m^1(t)(y),\ee
model the facts that the first species is attracted by the second one and    that the latter is repelled by first one, respectively. Note that the drift terms in \eqref{first_coefficients_W}-\eqref{second_coefficients_W} do not satisfy {\bf(H)} because the linear growth is not uniform w.r.t. $m^{\ell}$. This can be easily fixed by considering suitable compactly supported $C^\infty$ approximations of the function $y-x$. In our simulations, we work on a bounded domain  and so we work directly with the coefficients \eqref{first_coefficients_W}-\eqref{second_coefficients_W}. It is easy to see that that these drift terms satisfy {\bf(Lip)}.

Existence and uniqueness results of weak solutions to \eqref{species} has been proved in \cite{DiFrancescoFagioli13} when  $E_1=E_2=0$.   In the diffusive case, existence of at least one weak solution, which is absolutely continuous w.r.t. the Lebesgue measure, has been proved  in \cite{CarLab15}. We refer the reader to \cite{BCL16} for the numerical resolution of \eqref{species} by the so-called JKO scheme combined with the augmented Lagrangian method.

Since under {\bf(H)} the coefficients should be continuous with respect  to the weak convergence of probability measures, we need to regularize the local term $E'(m)= \frac{3}{2}m^2$. We do this by convolution. More precisely, given a regularization parameter $\delta>0$ we define $E'_\delta: C([0,T]; \P_1(\RR^d)))\times \RR^d \times [0,T] \to \RR$ as
$$E'_{\delta}(m,x,t):= \frac{3}{2}(m(t)\ast \phi_{\delta}(x))^2,$$
where   $\phi_{\delta}(x)=\sqrt{2\pi}\delta \exp{(-|x|^2/(2 \delta^2))}.$ 
We then consider the following variation of \eqref{species}:
\be\label{modelreg}
\begin{cases}
	\partial_t  m^1 -\mbox{div}( m^1(\nabla E'_{\delta}(m^1)+\nabla U_1(m^1,m^2) ))=0, \\[6pt]	
	\partial_t  m^2 - \mbox{div}( m^2(\nabla E'_{\delta}(m^2)+\nabla U_2(m^1,m^2) ))=0, \\[6pt]
	m^{1}(\cdot, 0)= \bar{m}_0^1(\cdot), \hspace{0.5cm} m^{2}(\cdot, 0)= \bar{m}_0^2(\cdot),
\end{cases}
\ee
which satisfies {\bf(H)}, with the suitable modifications of \eqref{first_coefficients_W}-\eqref{second_coefficients_W}.

\subsubsection{ Numerical test } \label{testspecies}  
We numerically solve  system \eqref{modelreg} with $d_1=d_2=2$ on a domain $\Omega\times [0,T]=[-1,1]\times [-1,1]\times [0,5]$, 
with homogeneous Neumann boundary conditions, $\delta=0.02$ and initial conditions 
$$
m^1(x,0)=\frac{\nu_1(x)}{\bar \nu_1}  \hspace{0.6cm} \mbox{and}  \hspace{0.6cm} m^2(x,0)=\frac{\nu_2(x)}{\bar \nu_2},
$$
where 
$$\ba{rcl}\nu_1(x_1,x_2)&:=&\left[0.2- (x_1-0.5)^2 -\frac{(x_2+0.5)^2}{2} \right]_{+}^2,\\[10pt]
 \nu_2(x_1,x_2)&:=&\left[0.2- (x_1+0.5)^2 - \frac{(x_2-0.5)^2}{2}\right]_{+}^2,
\ea
$$
and, for $a\in \RR$, $a^+:= \max\{0,a\}$, and $\bar \nu_1$, $\bar \nu_2$ are two positive constants such that 
$$\int_\Omega m^1(x,0)\dd x=\int_\Omega m^2(x,0) \dd x=1.$$
 
In Figure \ref{Test3} we display the evolution of the two densities at  the times $t=0$, $1$, $2$, $3$, $4$, $5$ computed  with $\rho=2e^{-2} $ and $h=\frac{1}{3}\rho^{3/2}$. 
The first plot on the top left shows the initial configurations: $\mathbf{m}_{\rho,h}^1$ is represented by the density located on the bottom right and $\mathbf{m}_{\rho,h}^2$ by the density located on the top left of the numerical domain.  As time evolves, we observe the density $\mathbf{m}_{\rho,h}^1$ moving towards the density $\mathbf{m}_{\rho,h}^2$, which is  instead repelled by $\mathbf{m}_{\rho,h}^1$. Due to the presence of Neumann boundary conditions, $\mathbf{m}_{\rho,h}^2$ get  finally captured in the upper left corner of the domain.
We can also observe the effect of the regularization of the nonlinear diffusion terms along with  the effect of the attraction potential $W_{11}$: the numerical support of the density  $\mathbf{m}_{\rho,h}^1$   takes a circular  shape. 
In Figure \ref{Test3b}, we show a 3D view of the initial configuration (left) and the final configurations of  $\mathbf{m}_{\rho,h}^1$(center) and $\mathbf{m}_{\rho,h}^2$(right).
\begin{figure}[ht!]
\begin{center}
\includegraphics[width=4cm]{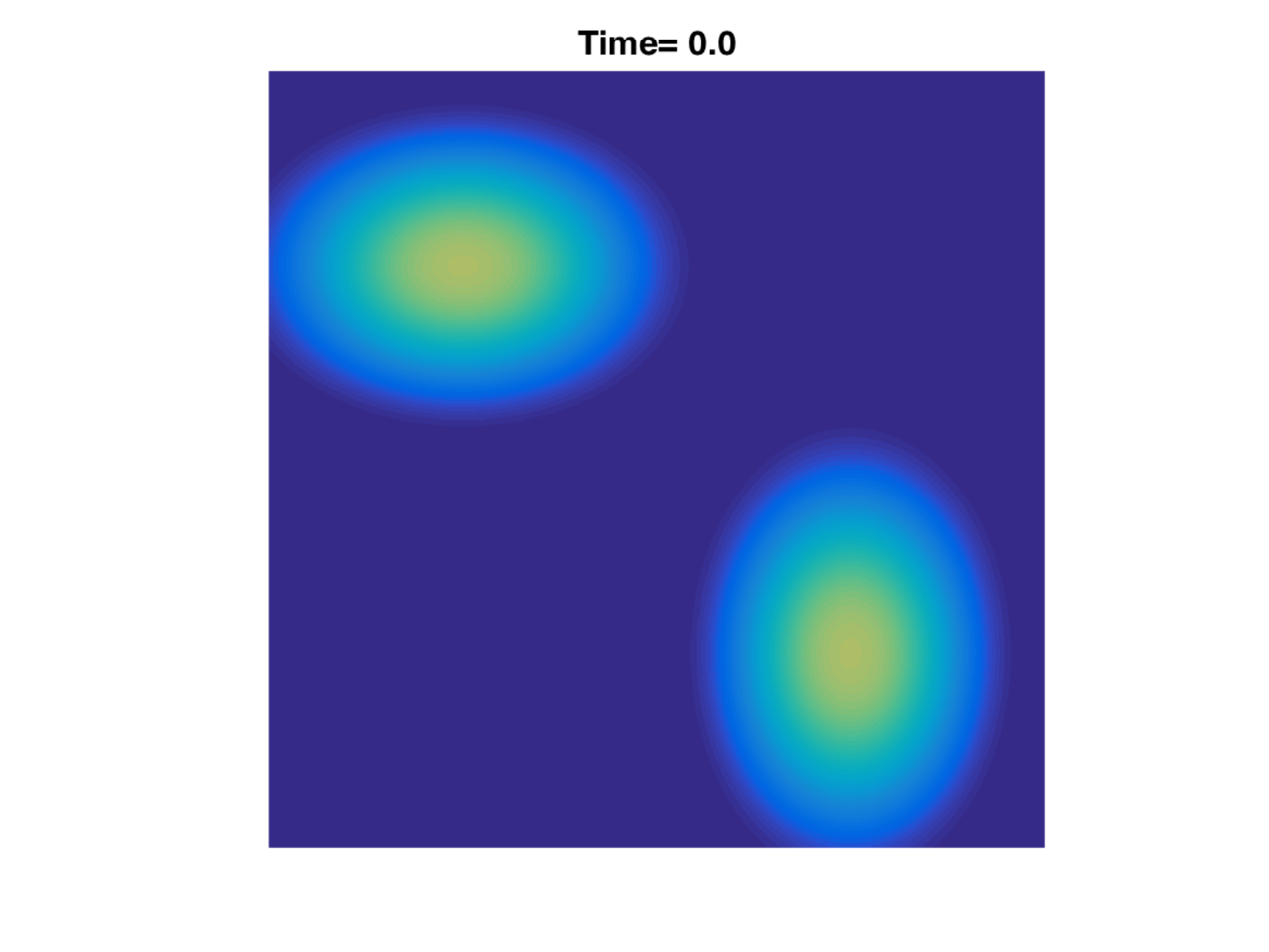}
\includegraphics[width=4cm]{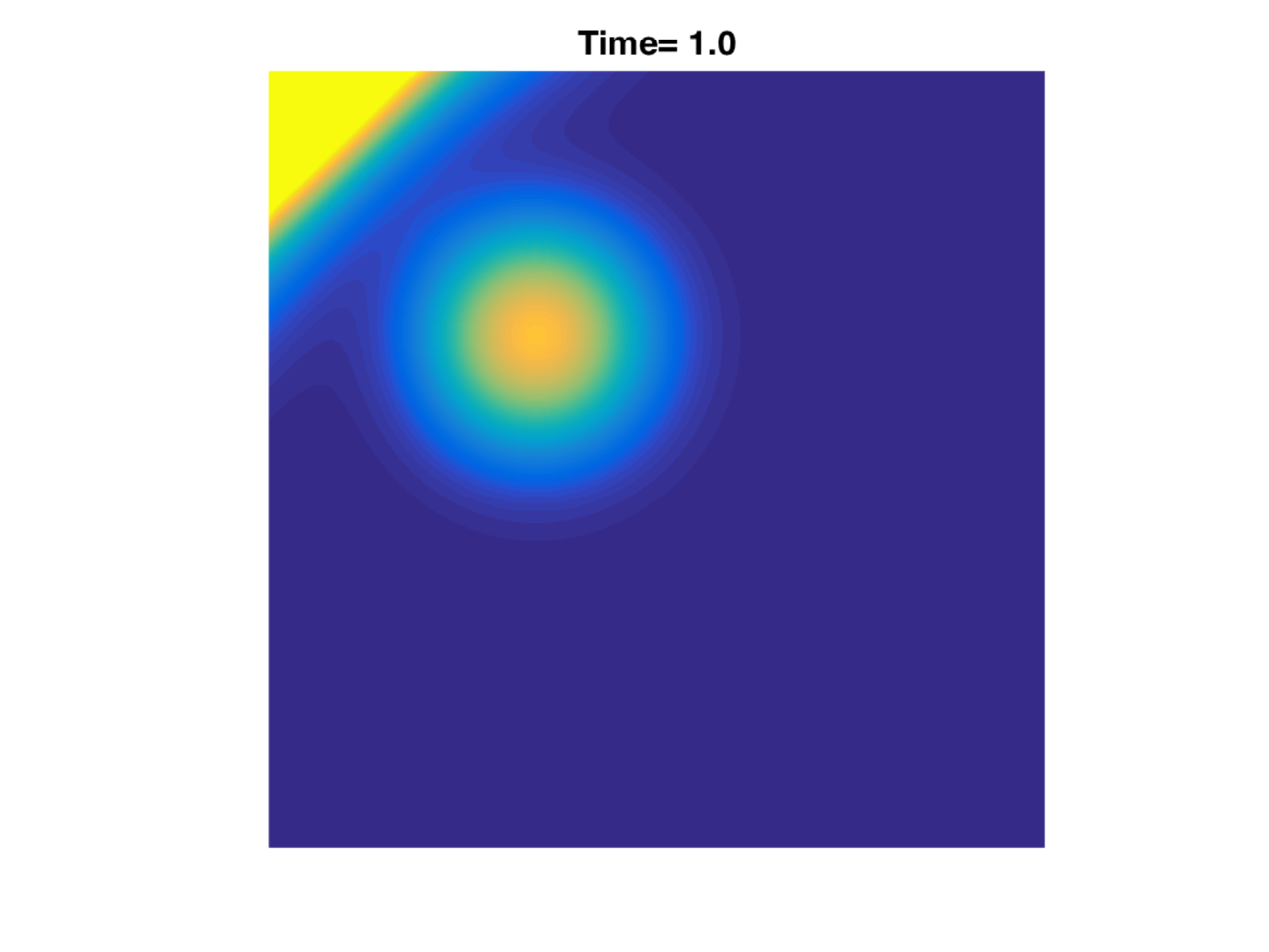}
\includegraphics[width=4cm]{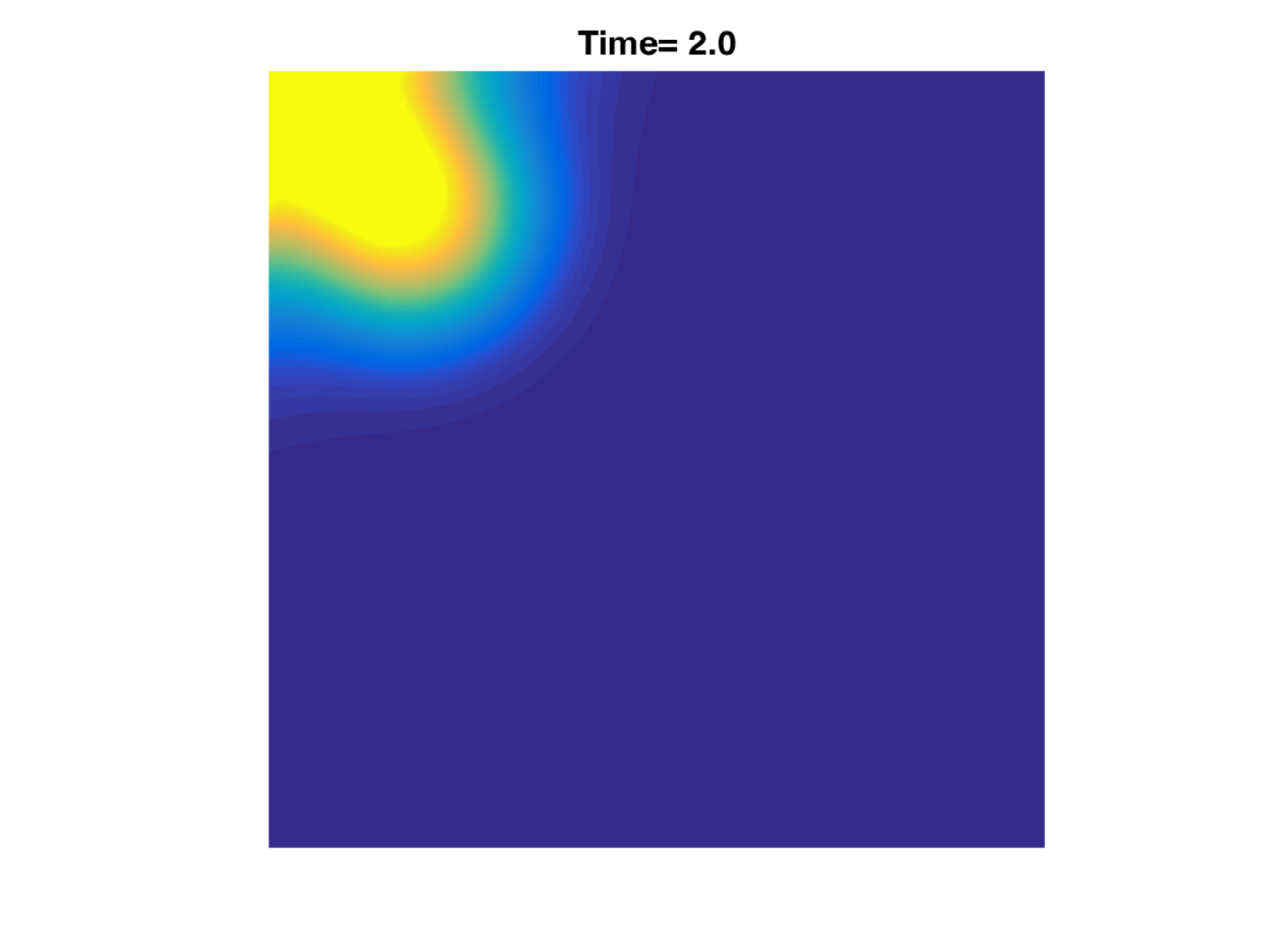}\\
\includegraphics[width=4cm]{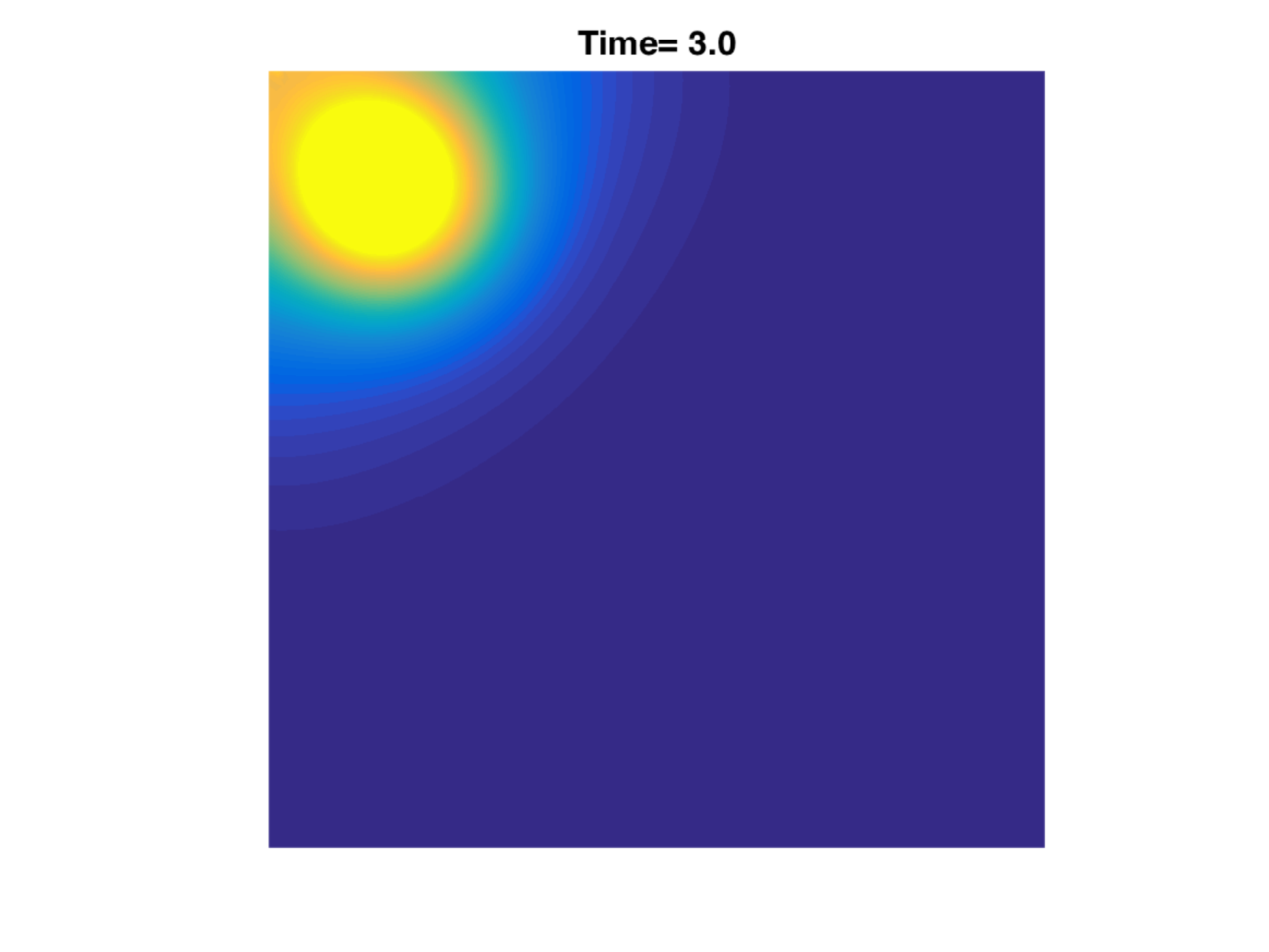}
\includegraphics[width=4cm]{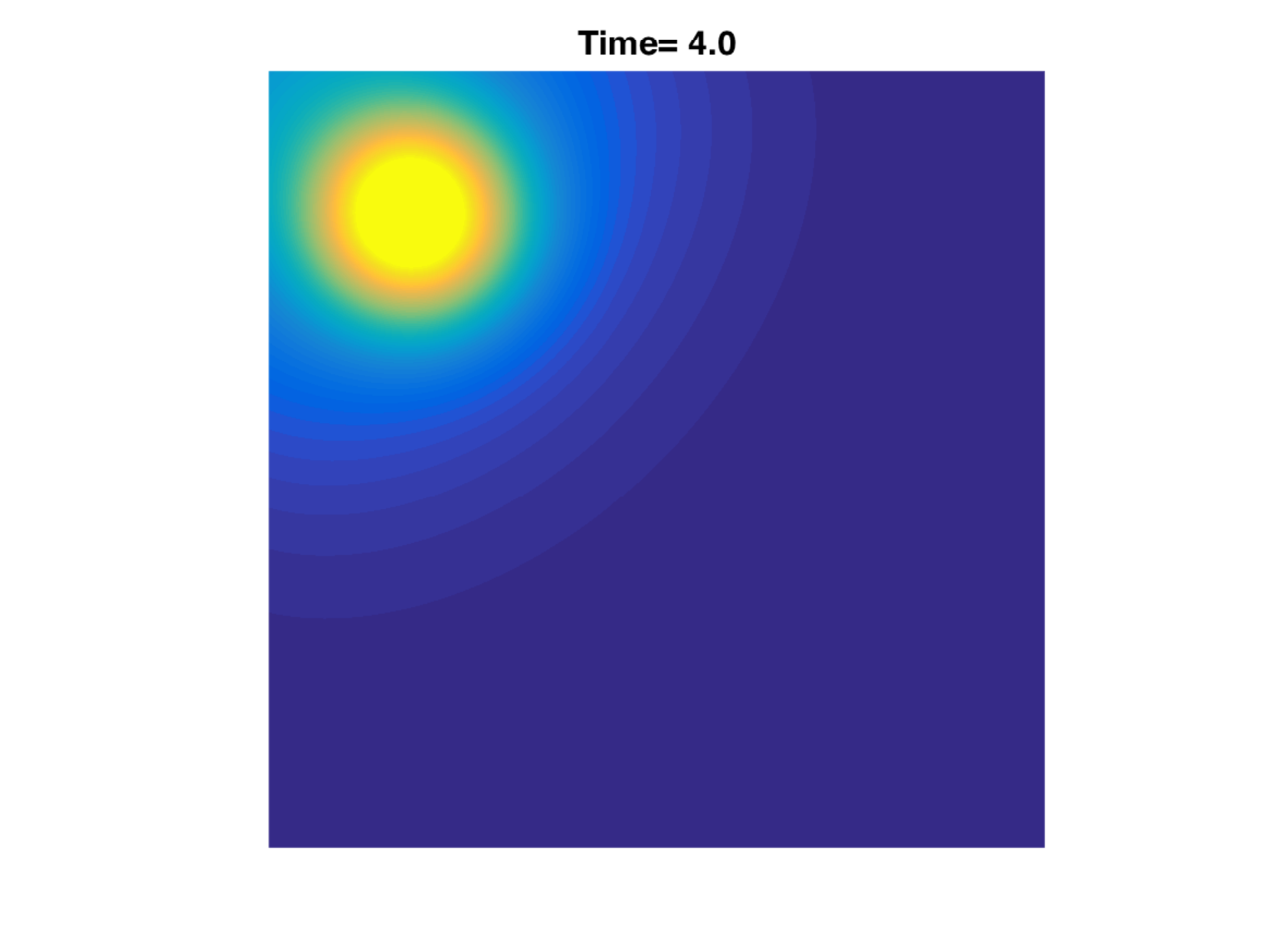}
\includegraphics[width=4cm]{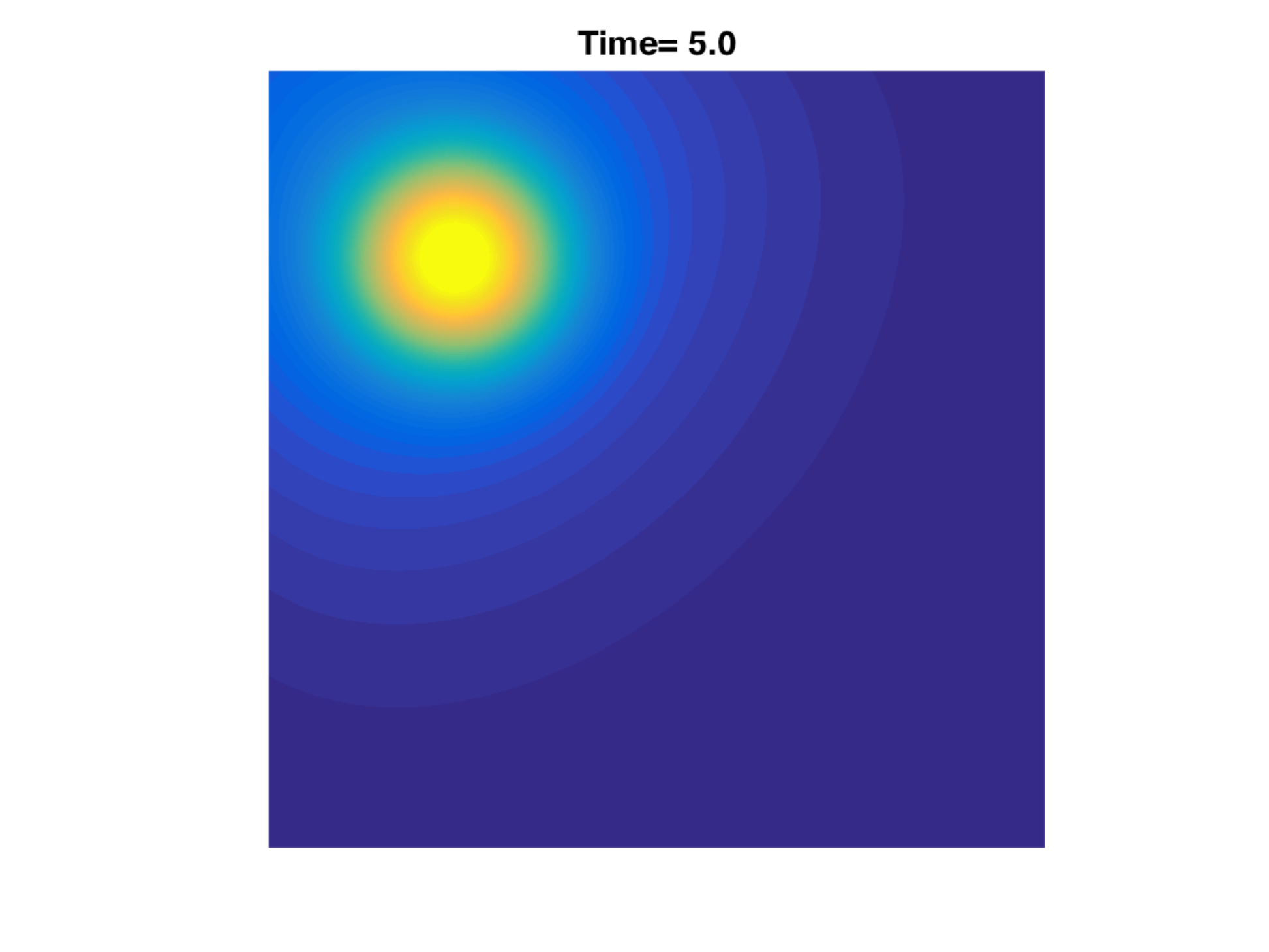}
{\caption{Evolution of  the two densities $\mathbf{m}_{\rho,h}^1$ and $\mathbf{m}_{\rho,h}^2$  at the times  $t=0$, $1$, $2$, $3$, $4$, $5$.
\label{Test3}}}
\end{center}
\end{figure}
\begin{figure}[ht!]
\begin{center}
\includegraphics[width=4cm]{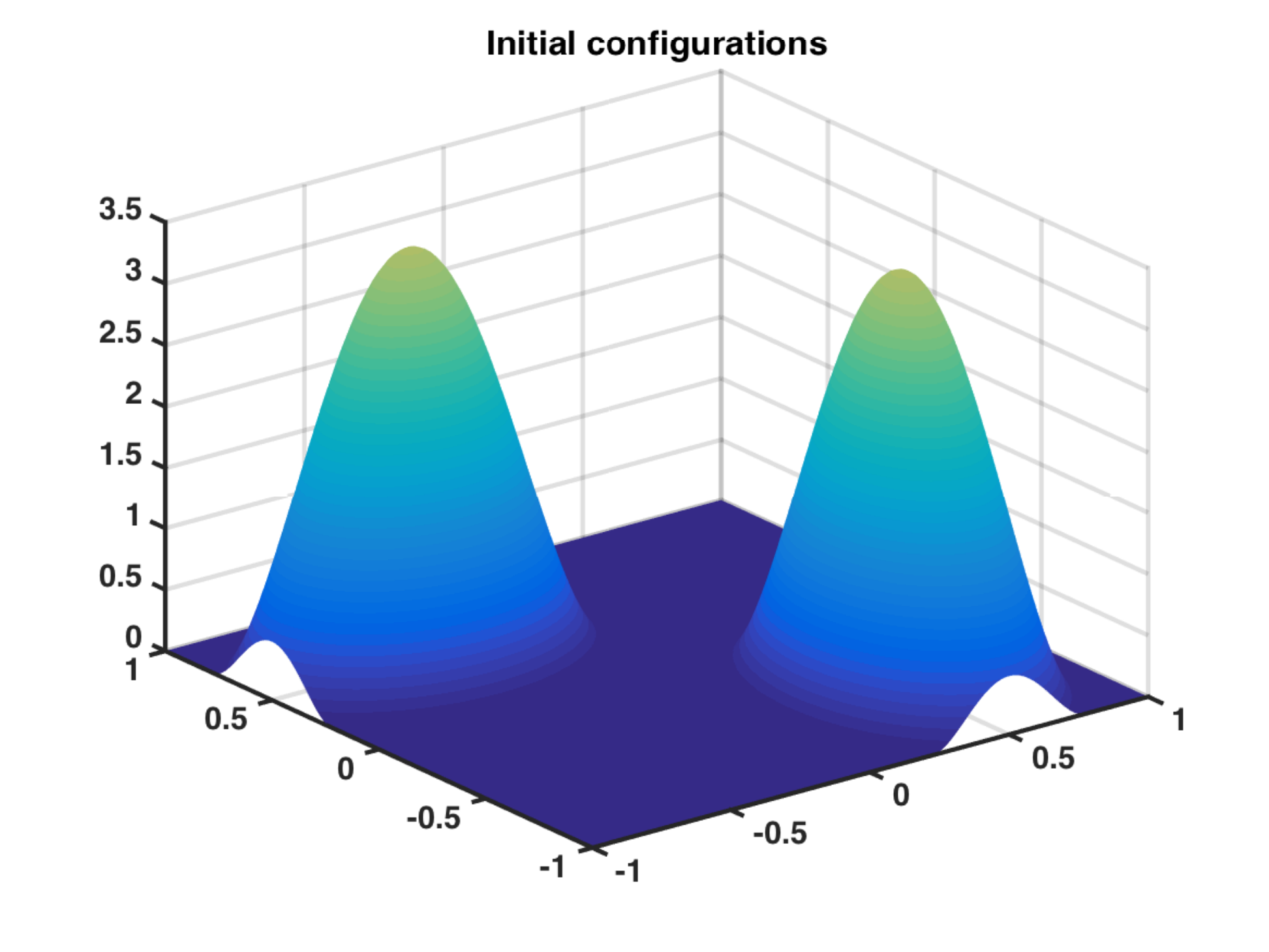}
\includegraphics[width=4cm]{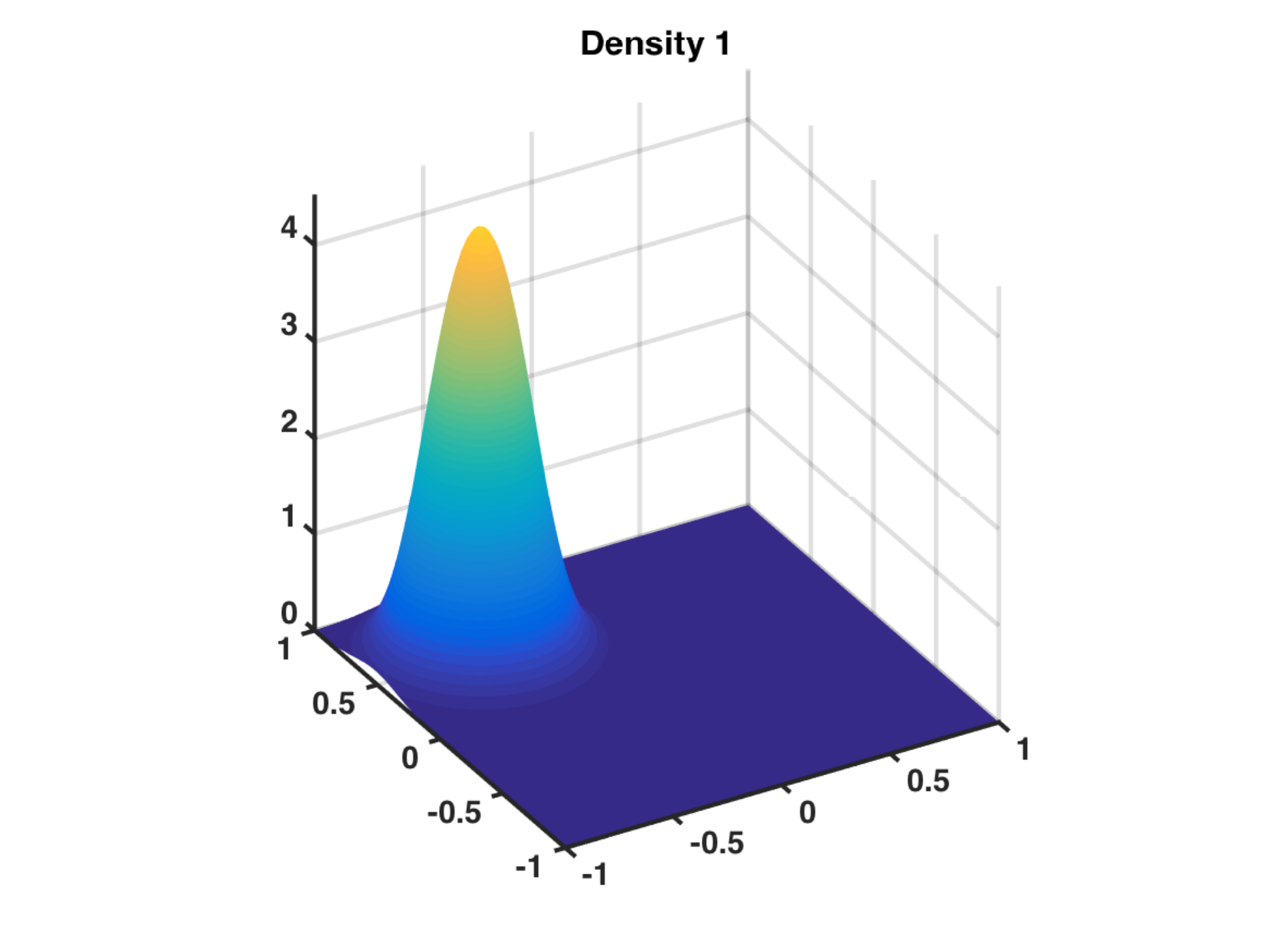}\includegraphics[width=4cm]{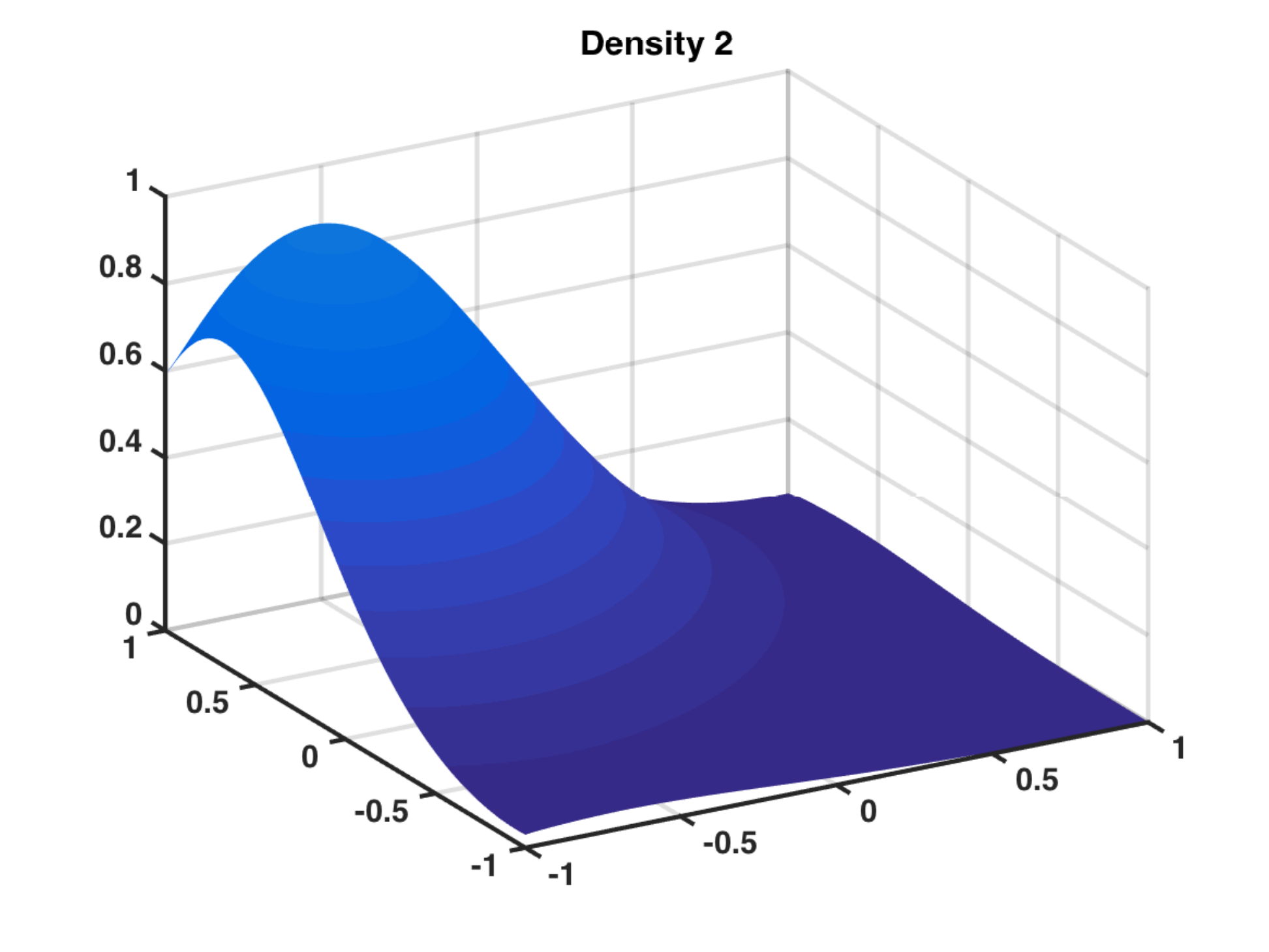}
{\caption{3D view of the initial configuration (left), of the final configuration of  $\mathbf{m}_{\rho,h}^1$ (center) and $\mathbf{m}_{\rho,h}^2$ (right).
\label{Test3b}}}
\end{center}
\end{figure}

\subsection{Two populations Mean Field Games}
In this section, we consider the  following MFG system
\be\label{shelling} \tag{MFG}
\left\{\ba{rl}
	-\partial_t  v^1 -\nu \Delta v^1 +\half |\nabla v^1|^2&=V(m^1,m^2), \\[6pt]	
	-\partial_t  v^2 -\nu \Delta v^2 +\half |\nabla v^2|^2&=V(m^2,m^1), \\[6pt]	
	v^1(\cdot, T)= 0, & \;  v^2(\cdot, T)= 0, \\[6pt]
	\partial_t  m^1 -\nu \Delta m^1 -\mbox{div}( \nabla v^1 m^1)&=0, \\[6pt]	
	\partial_t  m^2 -\nu \Delta m^2 -\mbox{div}( \nabla v^2 m^2)&=0, \\[6pt]
	m^1(\cdot,0)= \bar{m}_0^1(\cdot),  & \; m^2(\cdot,0)= \bar{m}_0^2(\cdot).
\ea \right.
\ee
In the system above, $\nu \geq 0$, $\bar{m}_0^1$, $\bar{m}_0^2 \in L^{\infty}(\RR^d)$ ($d\in \NN \setminus \{0\}$) are densities with compact support and the local coupling term $V: \RR \times \RR  \to \RR$ is given by
\be\label{v_non_regularized}
V(m^1,m^2)=\left( \frac{m^1}{m^1+m^2}-0.7\right)^-+ (m^1+m^2-8)^+, 
\ee
where, for $a\in \RR$, we set $a^-:= a^+ -a$.  This system has been proposed in \cite{AcBarCi17} and models  interactions between two populations with xenophobia  and aversion to overcrowded regions effects.  As in the previous example, we need to regularize the local coupling term $V$ in order to obtain a  function that is continuous with respect to the weak convergence of probability measures. We proceed as in \cite[Section 6.2.1]{AcBarCi17}. Given $\eta$, $\delta>0$, we define $V_{\eta,\delta}: C([0,T]; \P_1(\RR^d))^2 \times \RR^d \times [0,T] \to \RR$ as   
$$\ba{rcl}V_{\eta,\delta} (m^1,m^2,x,t)&=&\Psi_{-,\eta}\left(  \frac{m^1(t) \ast \phi_\delta(x)}{m^1(t) \ast \phi_\delta(x) +m^2(t) \ast \phi_\delta(x)+\eta}-0.7\right)  \\[10pt]
\; & \; &+\Psi_{+,\eta}\left(m^1(t) \ast \phi_\delta(x)+ m^2(t) \ast \phi_\delta(x) -8\right),\ea$$ \normalsize
where    
$$\Psi_{-,\eta}(y):=\begin{cases}
-y+\frac{\eta}{2}(e^{\frac{y}{\eta}}-1) &y\leq 0,\\
\frac{\eta}{2}(e^{-\frac{y}{\eta}}-1) &y>0,\\
\end{cases}\quad \; \; \; 
\Psi_{+,\eta}(y):=\begin{cases}
\frac{\eta}{2}(e^{\frac{y}{\eta}}-1) &y\leq 0,\\
y+\frac{\eta}{2}(e^{-\frac{y}{\eta}}-1) &y> 0,\\
\end{cases}
$$
are smooth approximations of $(\cdot)^-$ and $(\cdot)^+$, respectively, and $m^1(t) \ast \phi_\delta(\cdot)$, $m^2(t) \ast \phi_\delta(\cdot)$ are defined as the convolutions of $m^1(t)$ and $m^2(t)$ with $\RR^d \ni x \mapsto \phi_{\delta}(x)=\sqrt{2\pi}\delta \exp{(-|x|^2/(2 \delta^2))}\in \RR$.

When $\nu>0$ and $\bar{m}_0^{\ell}$ ($\ell=1,2$) are sufficiently regular, the existence of classical solutions to \eqref{shelling} can be proved by standard methods (see \cite[Theorem 12]{AcBarCi17}, where the proof is provided when the space domain in \eqref{shelling} is bounded and Neumann boundary conditions are imposed on its boundary). 

In order to write \eqref{shelling} as $(FPK)$, note that by standard arguments in stochastic control theory (see e.g. \cite{MR2179357}) the first and second equations in \eqref{shelling} are equivalent to
\be\label{trajectorial_interpretation_hjbs}\ba{rcl}
v^{1}(x,t)&=& \inf_{\alpha_1} \; \EE\left( \int_{t}^{T} \left[ \half |\alpha_{1}(s)|^2+ V_{\eta,\delta}\left(m^1, m^2, X_1^{x,t,\alpha_1}(s),s\right) \right] \dd s \right),\\[6pt]
v^{2}(x,t)&=& \inf_{\alpha_2} \; \EE\left( \int_{t}^{T} \left[ \half |\alpha_{2}(s)|^2+ V_{\eta,\delta}\left(m^2, m^1, X_2^{x,t,\alpha_2}(s),s\right) \right] \dd s \right),
\ea
\ee
where the expectation $\EE$ is taken in a complete probability space $(\Omega,\F, \PP)$ on which  two independent $d$-dimensional Brownian motion $W^1$ and $W^2$ are defined, the $\RR^{d}$-valued processes $\alpha_1$ and $\alpha_2$ are adapted to the  natural filtration generated by $W^1$ and $W^2$, respectively,  and they satisfy $\EE\left(\int_{0}^{T} |\alpha^\ell(t)|^2 \dd t \right)<\infty$ ($\ell=1$, $2$). Finally, the processes $X_{\ell}^{x,t,\alpha_\ell}$ ($\ell=1$, $2$) are defined as the unique  solutions of
\be\label{controlled_equation_mfg}
\dd X_\ell(s)= \alpha_\ell(s) \dd s + \sqrt{2\nu} \dd W^\ell(s) \, \; s\in (t,T),  \hspace{0.8cm} X_\ell(t)=x.
\ee
By a verification argument (see e.g. \cite[Chapter III, Section 8]{MR2179357}),   the optimal dynamics for the problems defining $v^\ell$ ($\ell=1$, $2$) are given by the solutions of
$$
\dd X_\ell(s)=- \nabla v^\ell(X_\ell(s),s) \dd s + \sqrt{2\nu} \dd W^\ell(s) \, \; s\in (t,T),  \hspace{0.8cm} X_\ell(t)=x.
$$
Therefore, redefining $v^{\ell}: C([0,T]; \P_1(\RR^d))^2 \times \RR^d \times [0,T] \to \RR$ as
\be\label{trajectorial_interpretation_hjbs_depending_on_mu}\ba{rcl}
v^{1}(\mu^1,\mu^2,x,t)&=& \inf_{\alpha_1} \; \EE\left( \int_{t}^{T} \left[ \half |\alpha_{1}(s)|^2+ V_{\eta,\delta}\left(\mu^1, \mu^2, X_1^{x,t,\alpha_1}(s),s\right) \right] \dd s \right),\\[6pt]
v^{2}(\mu^1,\mu^2,x,t)&=& \inf_{\alpha_2} \; \EE\left( \int_{t}^{T} \left[ \half |\alpha_{2}(s)|^2+ V_{\eta,\delta}\left(\mu^2, \mu^1, X_2^{x,t,\alpha_2}(s),s\right) \right] \dd s \right),
\ea
\ee
we have that \eqref{shelling}, with $V_{\eta, \delta}$ instead of $V$ on the right hand side of the first and second equations, is equivalent to $(FPK)$ with $d_1=r_1=d_2=r_2=d$ and 
\be\label{coefficient_mfg_two_populations}
b^{\ell}(\mu^1,\mu^2,x,t)= - \nabla v^{\ell}(\mu^1,\mu^2,x,t) \; \ \mbox{and } \; \sigma^{\ell}(\mu^1,\mu^2,x,t)= \sqrt{2\nu} I_{d\times d}, 
\ee
where $I_{d\times d}$ is the $d\times d$ identity matrix. Arguing as in \cite{Carlini_Silva_2017} for the one population case, if $\nu>0$, it is easy to prove that for these drift terms, assumptions {\bf(H)} and {\bf(Lip)} are satisfied. 

As \eqref{trajectorial_interpretation_hjbs} shows, at the equilibrium \eqref{shelling} a typical player of population $\ell$ minimizes a cost that penalizes its speed, modelled by the quadratic penalization on $\alpha^\ell$, as well as a cost depending of its position, and the distribution of his and the other populations. Recalling that $V_{\eta,\delta}$ is an approximation of $V$, defined in \eqref{v_non_regularized}, the cost $V_{\eta,\delta}$ models a xenophobia effect (the regularization of the first term in $V$) and penalizes   overcrowded regions  taking into account the sum of both populations (the regularization of the second term in $V$). 

Note that the coefficients $b^\ell$ in \eqref{coefficient_mfg_two_populations} depend on the value functions $v^\ell$, which do not admit an explicit expression. Moreover, as \eqref{trajectorial_interpretation_hjbs_depending_on_mu} shows,  $b^{\ell}(\mu^1,\mu^2,x,t)$ depends on the values $(\mu^1(s), \mu^2(s))$ with $s \in (t,T)$, and so the scheme \eqref{scheme_nonlinear_stochastic_case_multiple_populations} is implicit (see Remark \ref{numerical_inefficiency_one_fp}{\rm(ii)}). In order to obtain an implementable scheme, we approximate $b$ by computable vector fields. More precisely,  we use a Semi-Lagrangian scheme to approximate $v^1$ and $ v^2$, as described in \cite{CS15} and in Section 5.3 of \cite{Carlini_Silva_2017}  for the case of  a single population. 
We then call $v^{\ell,\rho,h}: C([0,T]; \P_{1}(\RR^d))^2  \times \RR^d \times [0,T] \to \RR$ ($\ell=1,2$) the resulting interpolated discrete value functions and
we  regularize them by using    space convolution
$$
v^{\ell, \rho,h,\eps}[\mu^1,\mu^2](\cdot, t):= \phi_{\eps} \ast v^{\ell, \rho,h}[\mu](\cdot,t) \hspace{0.3cm} \forall \; t\in [0,T],
$$
 where $\phi_{\eps}(x)=\sqrt{2\eps}\delta \exp{(-|x|^2/(2 \eps^2))}$.
Next, we approximate the drifts in \eqref{coefficient_mfg_two_populations} by
 $$b^{\ell}[\mu^1,\mu^2](x,t):=- \nabla_{x} v^{\ell,\rho,h,\eps}[\mu^1,\mu^2](x,t).$$
Consider sequences $\rho_n$, $h_n$ and $\eps_n$    converging to $0$ as $n\to \infty$ and define $(m_n^1,m_n^2)$ as the sequence in $C([0,T];\P_1(\RR^d))^2$ constructed with the scheme \eqref{scheme_nonlinear_stochastic_case_multiple_populations} and the extension \eqref{linear_interpolation_extension_of_simplex}, by considering discrete characteristics computed with the drifts $b^\ell_{n}[\mu^1,\mu^2](x,t):=- \nabla_{x} v^{\ell,\rho_n,h_n,\eps_n}[\mu^1,\mu^2](x,t)$. Then, arguing   exactly as in \cite[Section 5.3]{Carlini_Silva_2017}, we can prove that if $\nu>0$ and  $\rho_n^2= o(h_n)$ and $\rho_n=o(\eps_n)$ and $(\mu_n^1, \mu_n^2) \to (\mu^1, \mu^2)$ in  $C([0,T];\P_1(\RR^d))^2$ we have that {\bf(H')} is satisfied. Therefore, we can apply Theorem \ref{convergence_with_coefficient_approximation} to deduce that $(m_n^1,m_n^2)$  admits at least one limit point $(m^1,m^2)$ and every such limit point solves $(FPK)$.  When $\nu=0$, the situation is more delicate because  we need to construct approximations which are absolutely continuous  with respect to the Lebesgue measure (see  \cite[Remark 4.2 and Remark 5.1{\rm(ii)}]{Carlini_Silva_2017}). The resulting scheme is the natural extension of the one proposed in \cite{MR3148086} to the multipopulation case. Arguing as in the proof of Theorem 3.12 in \cite{MR3148086}, we can obtain a convergence result under the additional assumptions that $d=1$ and $h_n=o(\eps_n)$.

\subsubsection{ Numerical tests } \label{testshelling}
As in \cite[Section 6.2.1]{AcBarCi17}, we solve system \eqref{shelling}, with $V$ replaced by $V_{\eta,\delta}$, on the one dimensional space domain $\Omega=[-0.5,0.5]$. We set the final time $T=4$ and we consider homogeneous Neumann boundary conditions.
The initial densities are given by 
$$ m^1(x,0) = 3/4+1/2 \mathbb{I}_{[-1/2,-1/4]\cup [0,1/4]}(x) \quad {\textrm {and}}\quad  m^2(x,0) = 3/4+1/2 \mathbb{I}_{ [-1/4,0]\cup[1/4,1/2]}(x),$$
where for $A\subseteq \RR$, $\II_A(x)=1$ if $x\in A$ and $\II_A(x)=0$, otherwise.  We choose $\rho=0.02$ and $h=\rho^\frac{3}{2}$. 
The regularizing parameters are set to $\delta=\eps=0.025$ and  $\eta=10^{-5}$. 

In order to compute the solution of the fully discrete system, we have used  the learning procedure  proposed in \cite{CH17} in the continuous framework. We point out that a rigorous study of the convergence of this method for the resolution of discretizations of MFG systems has not been established yet and remains as an interesting research subject. We stop the procedure when the difference between two successive discrete densities,  measured in the maximum discrete norm,  is smaller than  $5\times 10^{-3}$. 

Due to the symmetry of the initial conditions and to the form of the coupling terms, the evolutions of the two populations are symmetric to each other. This symmetry can be observed in all the simulations. We also observe that the evolutions present a {\it turnpike} property since most of the time after and before the  $t=0$ and $t=T=4$, respectively,  the distribution is near a stationary configuration.

In Figure \ref{Test2a}, computed with $\nu=0.05$, we show the evolution  of the two densities  at  the  times $t=0$, $0.1$, $0.5$, $2$,  $3$, $4$. We can observe that the two densities separate from each other, with only a small overlap region at the end. We also observe that the  configurations at times $t=2$ and $t=3$ have the same shape,  which is near a stationary configuration (see \cite[Section 6.1]{AcBarCi17}). For this viscosity parameter, our results are almost identical to those in  \cite[Section 6.2.1, Figure 8]{AcBarCi17}.

In Figure \ref{Test2b}, computed with $\nu=0.001$,  we show the configuration at  the  times $t=0$, $0.1$, $0.2$, $1$,  $2$, $4$. The two densities  separate faster than the previous case, reaching a nearly steady-state solution already at time $t=1$. We can observe that the resulting segregated configurations    differ considerably from the the previous case, computed with $\nu=0.05$. 

In Figure \ref{Test2c}, computed with $\nu=0$, we show the configuration  of the two measures at  the  times $t=0$, $0.1$, $0.5$, $1$, $2$, $4$. As expected in the deterministic case, the evolution is much less smooth. Compared to the diffusive cases, at the final time $T$, the supports of the densities $\mathbf{m}_{\rho,h}^1$ and $\mathbf{m}_{\rho,h}^2$  are disjoint and separated by  much larger sets. We insist that, for the previous and the current tests, the solutions captured by the scheme  differ importantly from the ones computed with larger viscosity parameters (see Figure \ref{Test2a} and \cite[Section 6.2.1]{AcBarCi17}). 

\begin{figure}[ht!]
\begin{center}
\includegraphics[width=4cm]{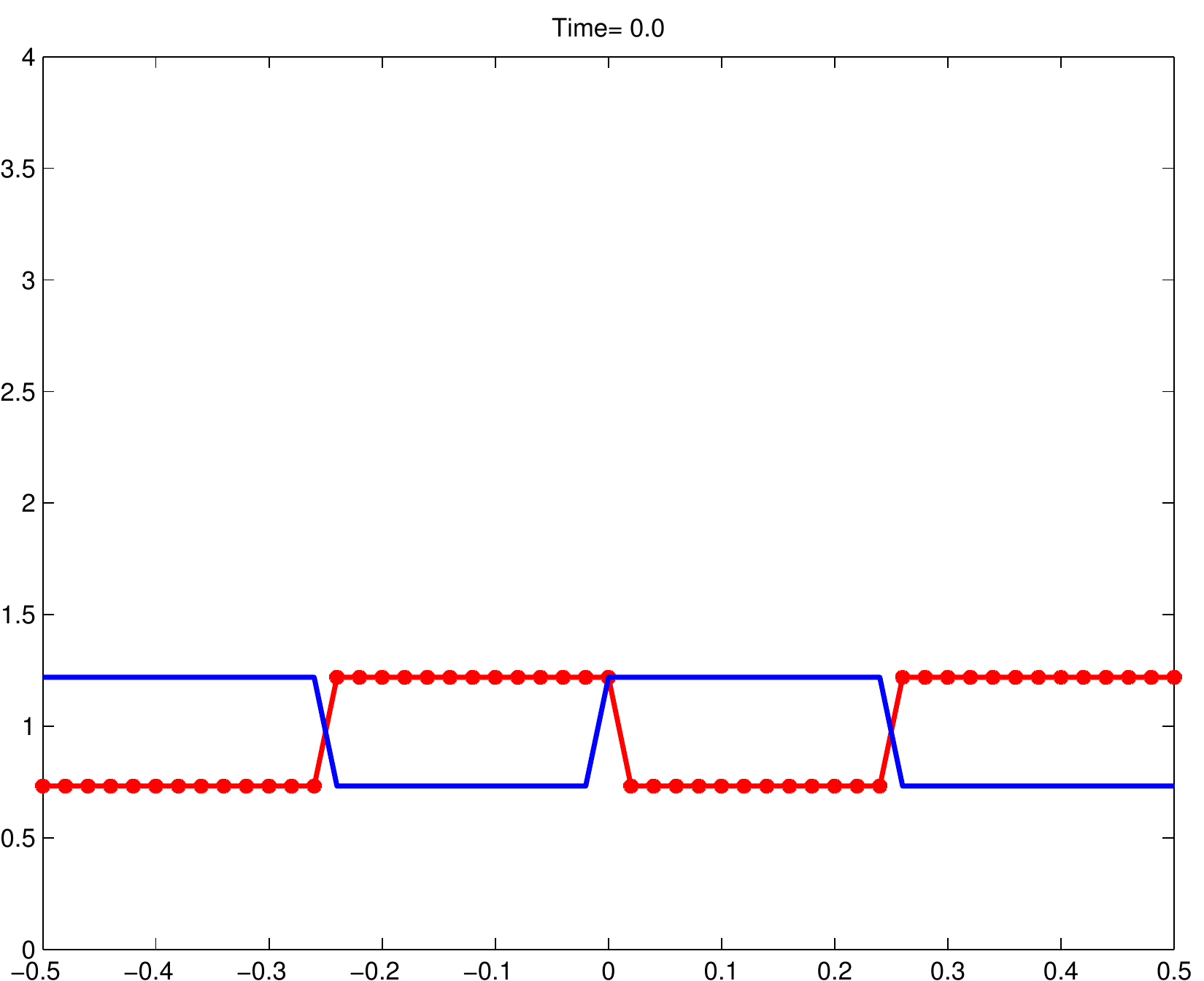}\includegraphics[width=4cm]{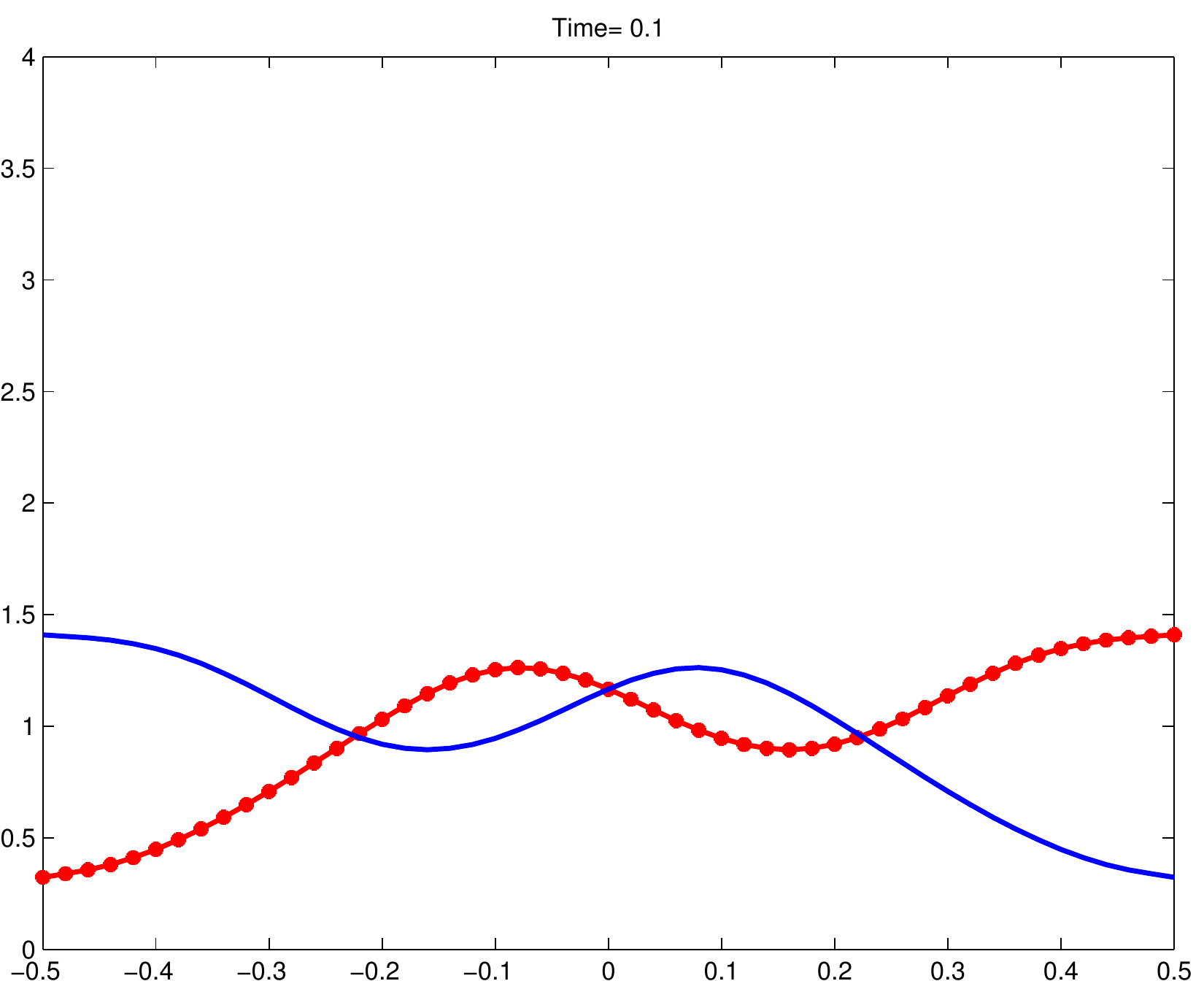}\includegraphics[width=4cm]{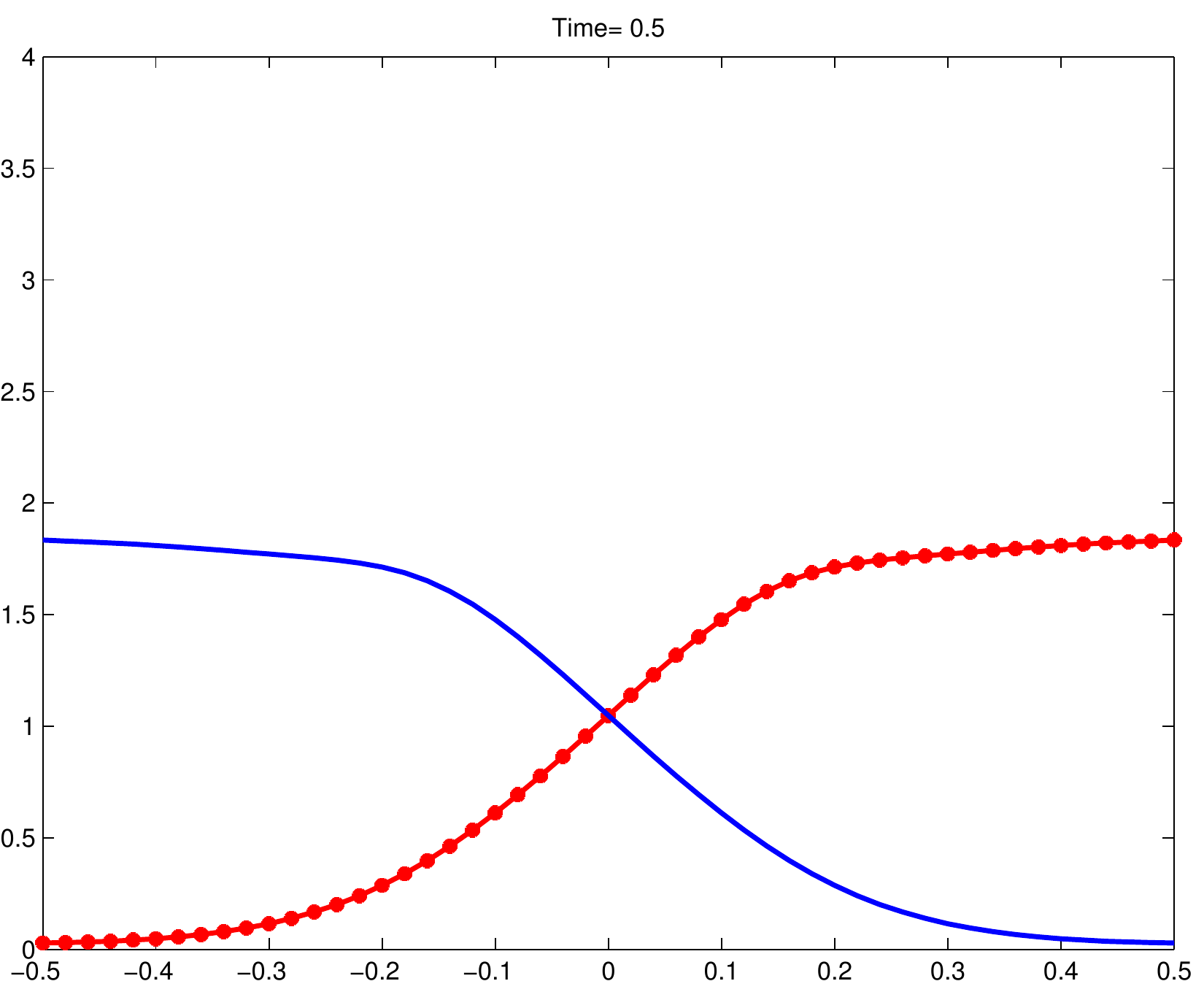}\\
\includegraphics[width=4cm]{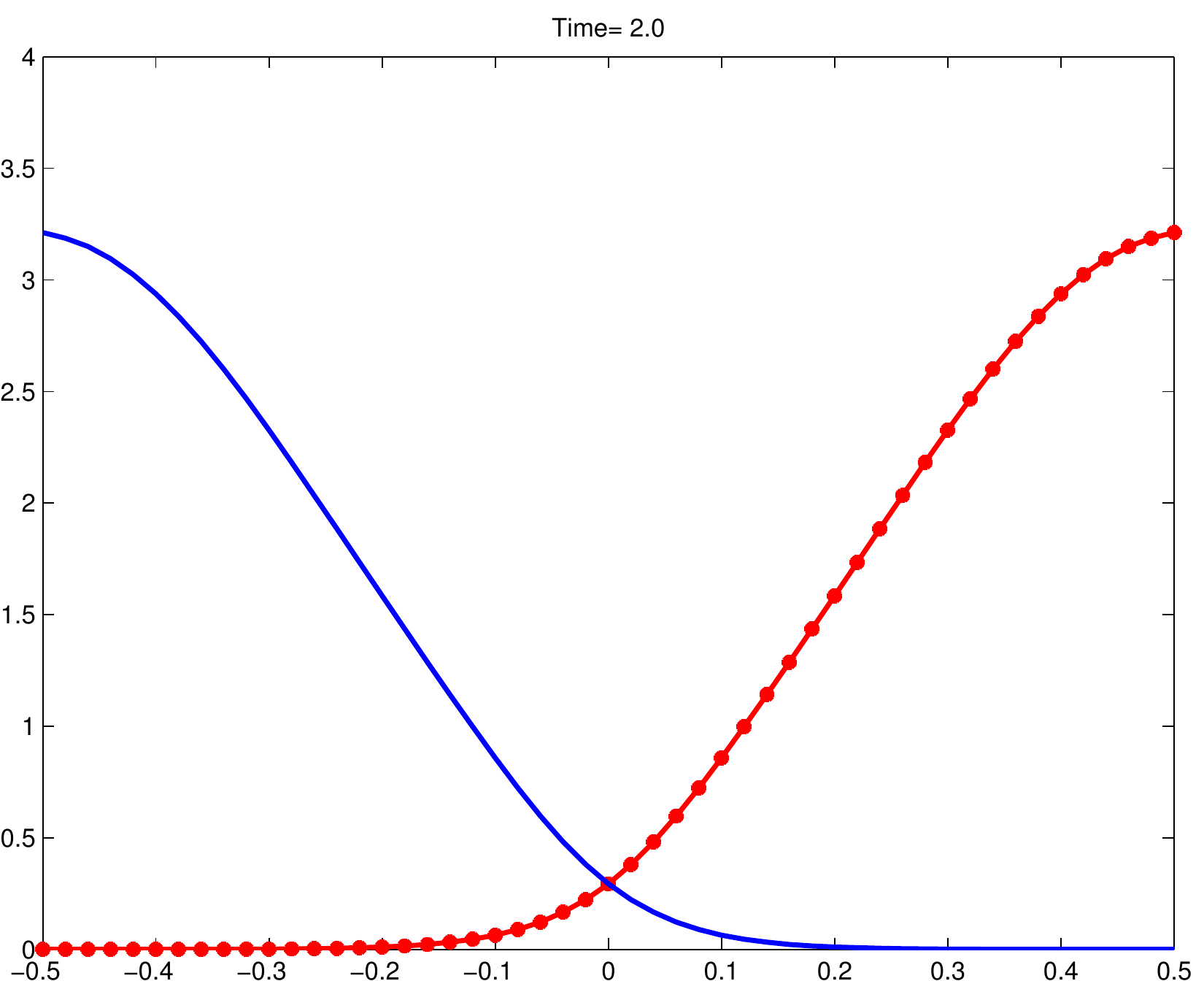}\includegraphics[width=4cm]{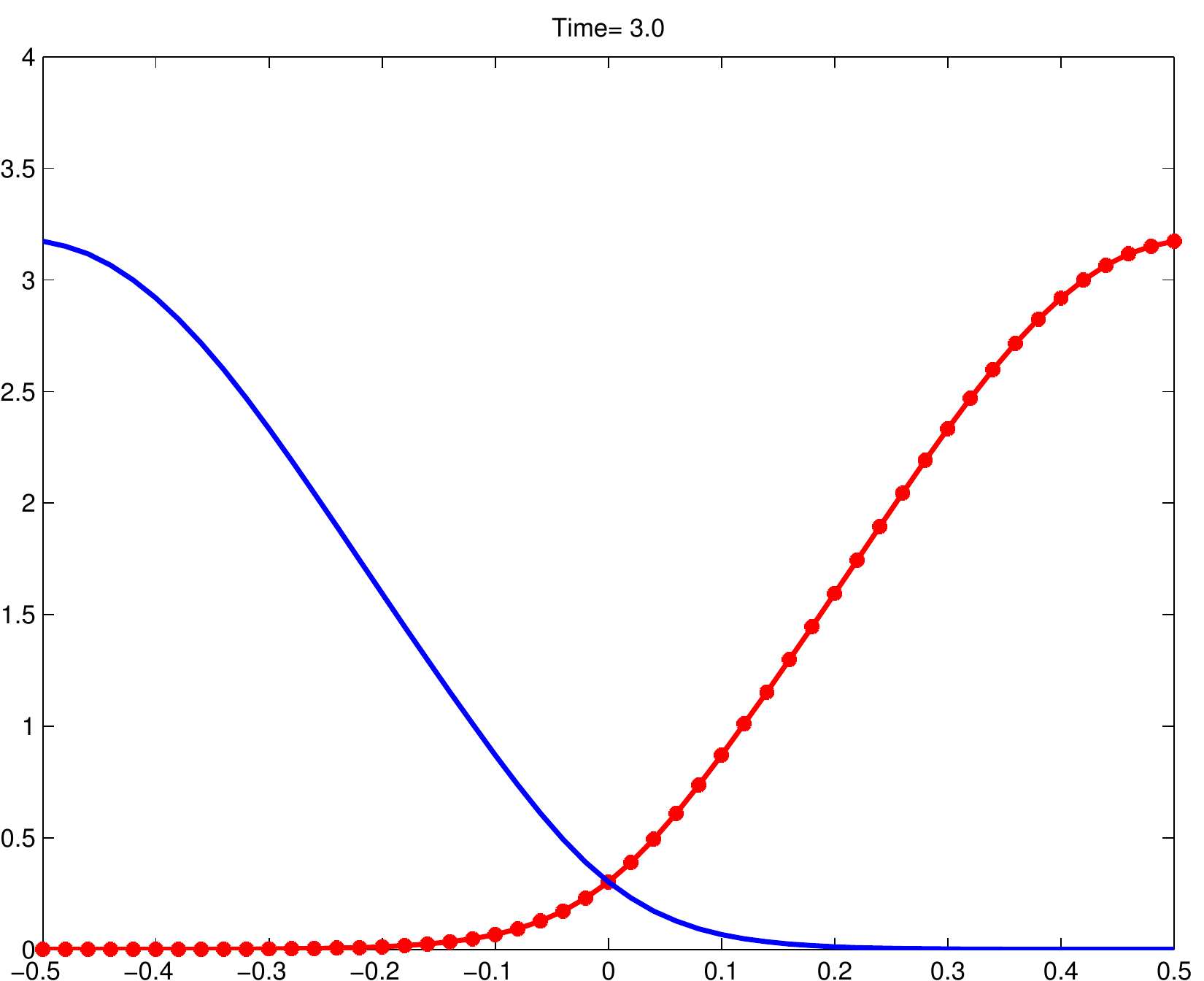}\includegraphics[width=4cm]{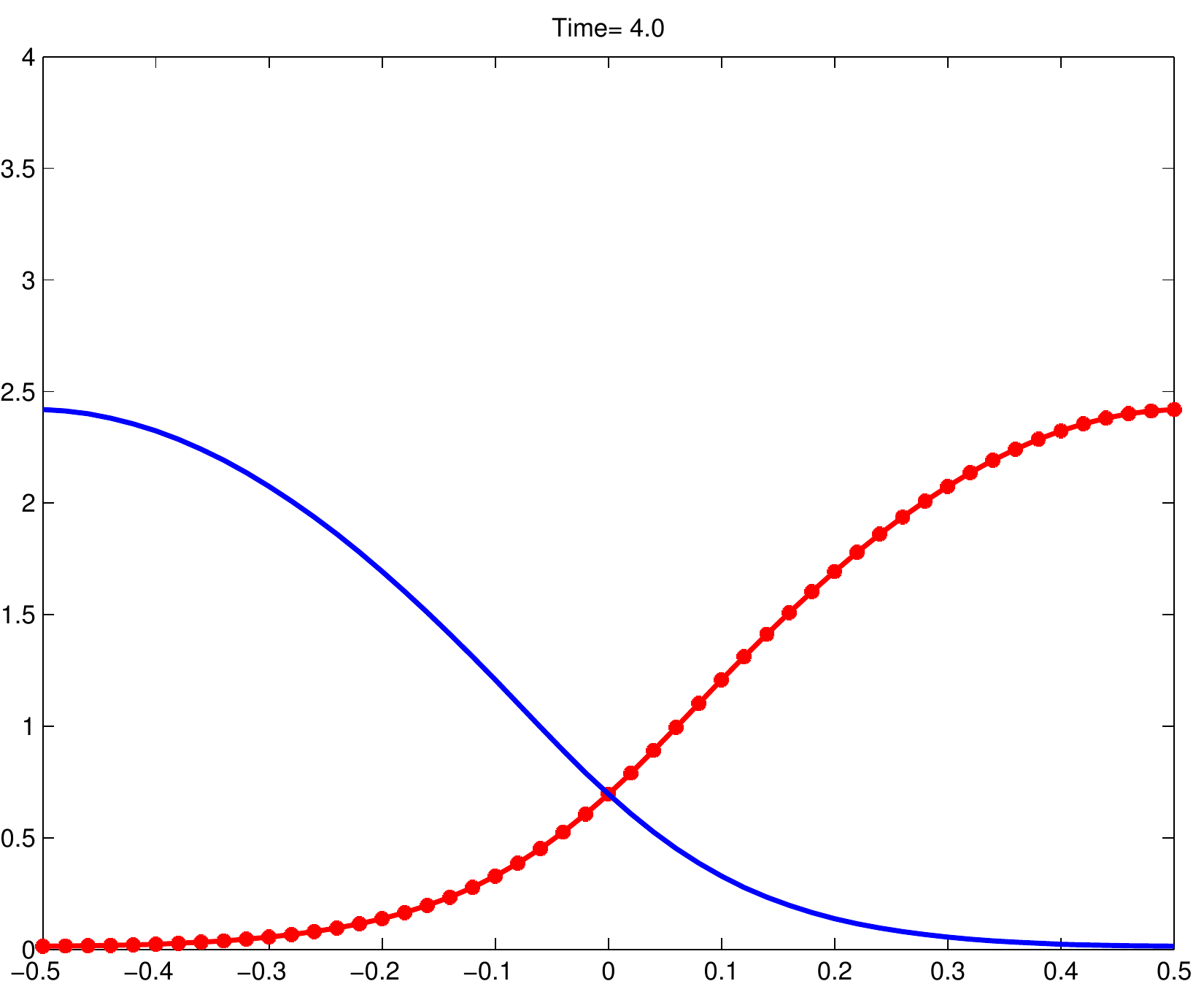}
{\caption{ Evolution of $\mathbf{m}_{\rho,h}^1$({\color{blue}-}) and $\mathbf{m}_{\rho,h}^2$ ({\color{red}{$-*$}}) at the time  $t=0$, $0.1$, $0.5$, $2$, $3$, $4$ computed with $\nu=0.05$.}\label{Test2a}}
\end{center}
\end{figure}
  \begin{figure}[ht!]
\begin{center}
\includegraphics[width=4cm]{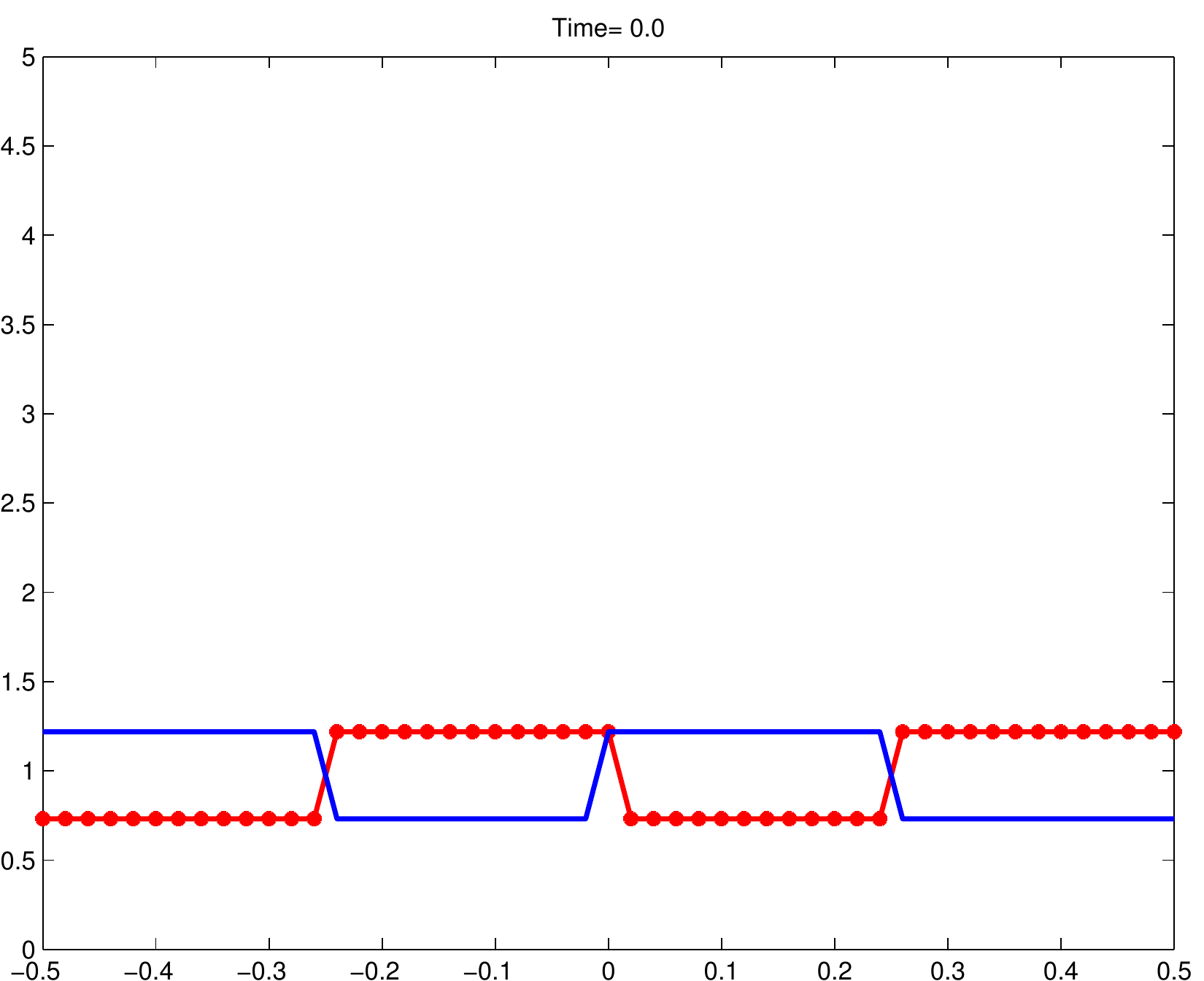}\includegraphics[width=4cm]{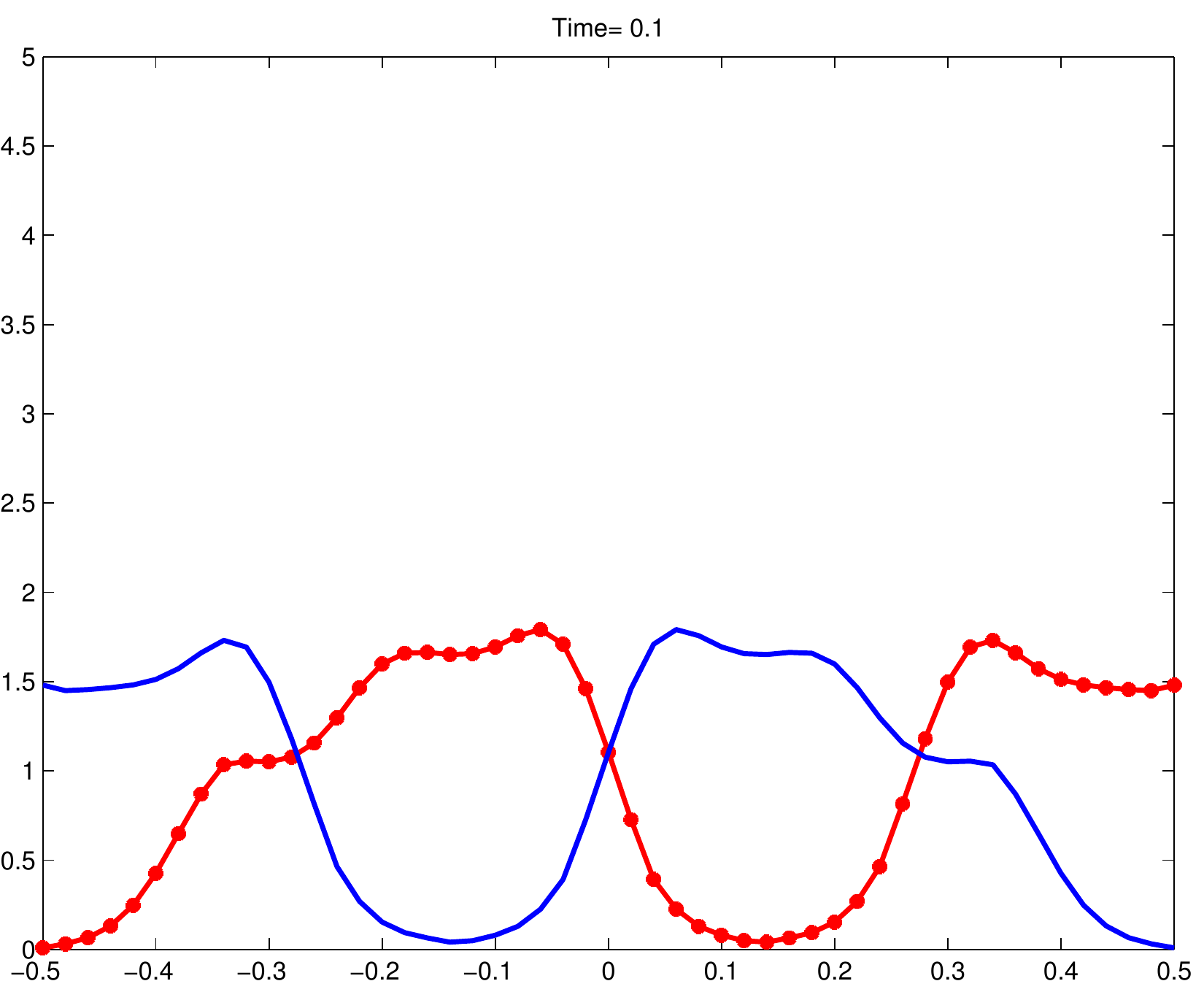}\includegraphics[width=4cm]{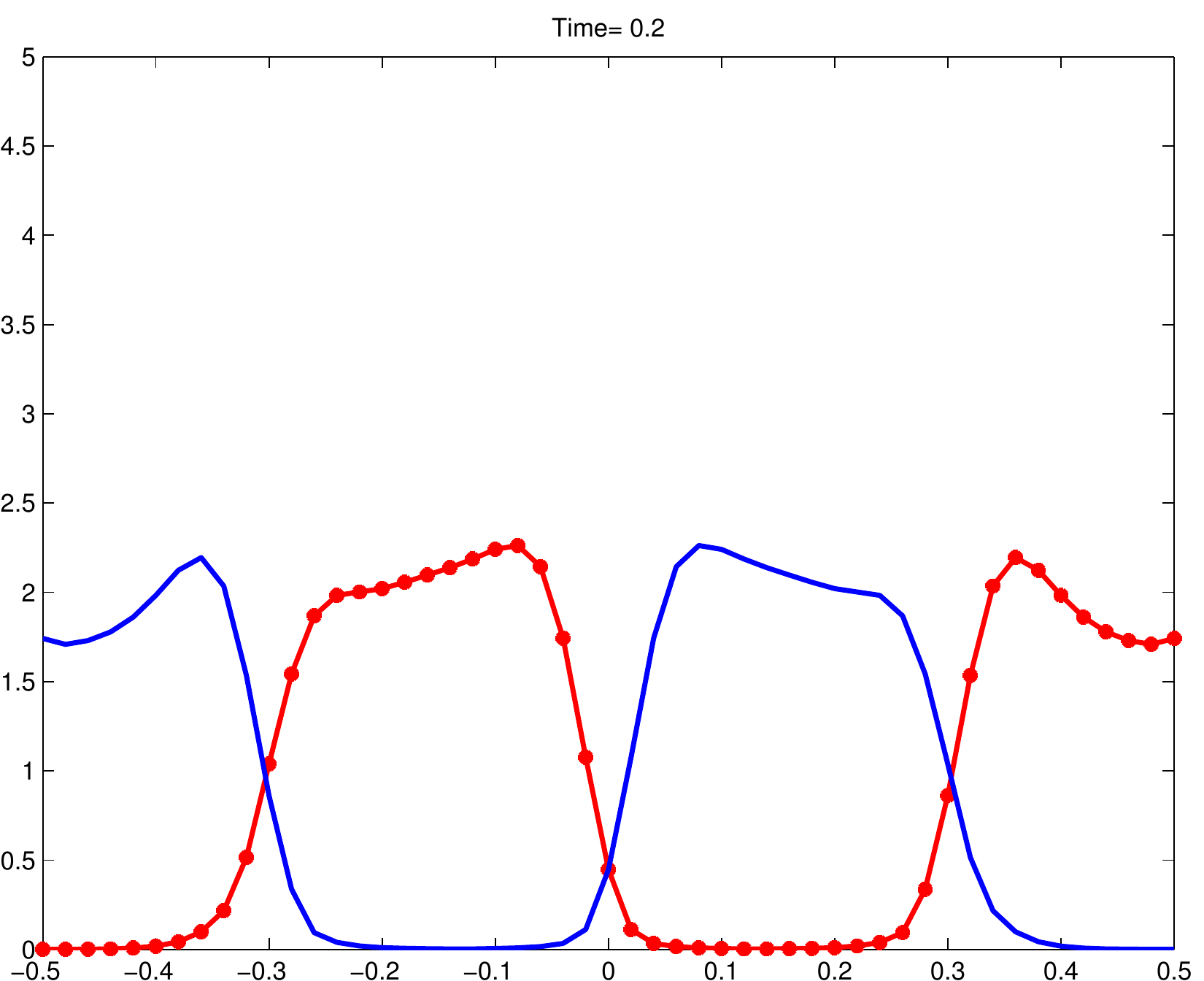}\\
\includegraphics[width=4cm]{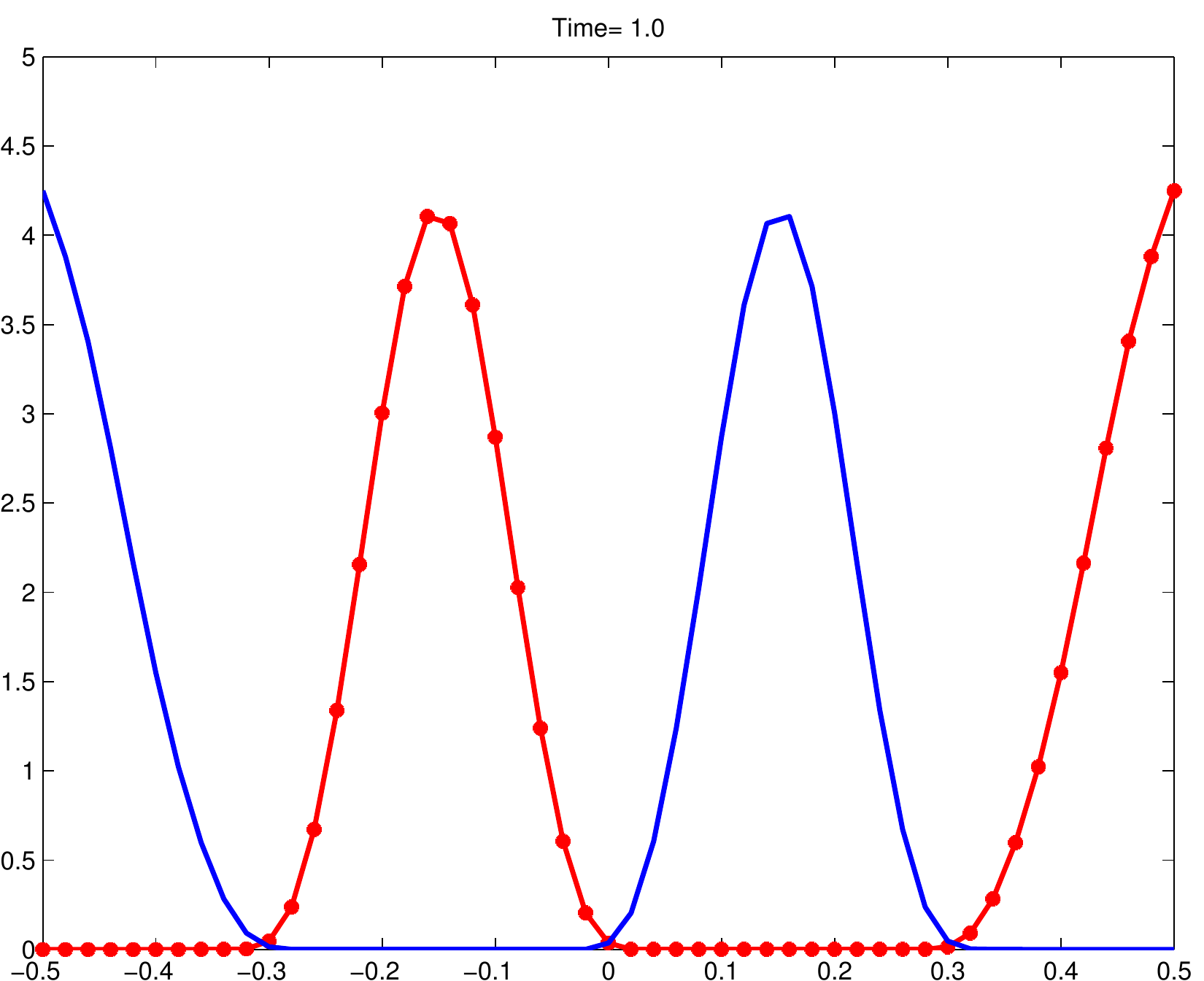}\includegraphics[width=4cm]{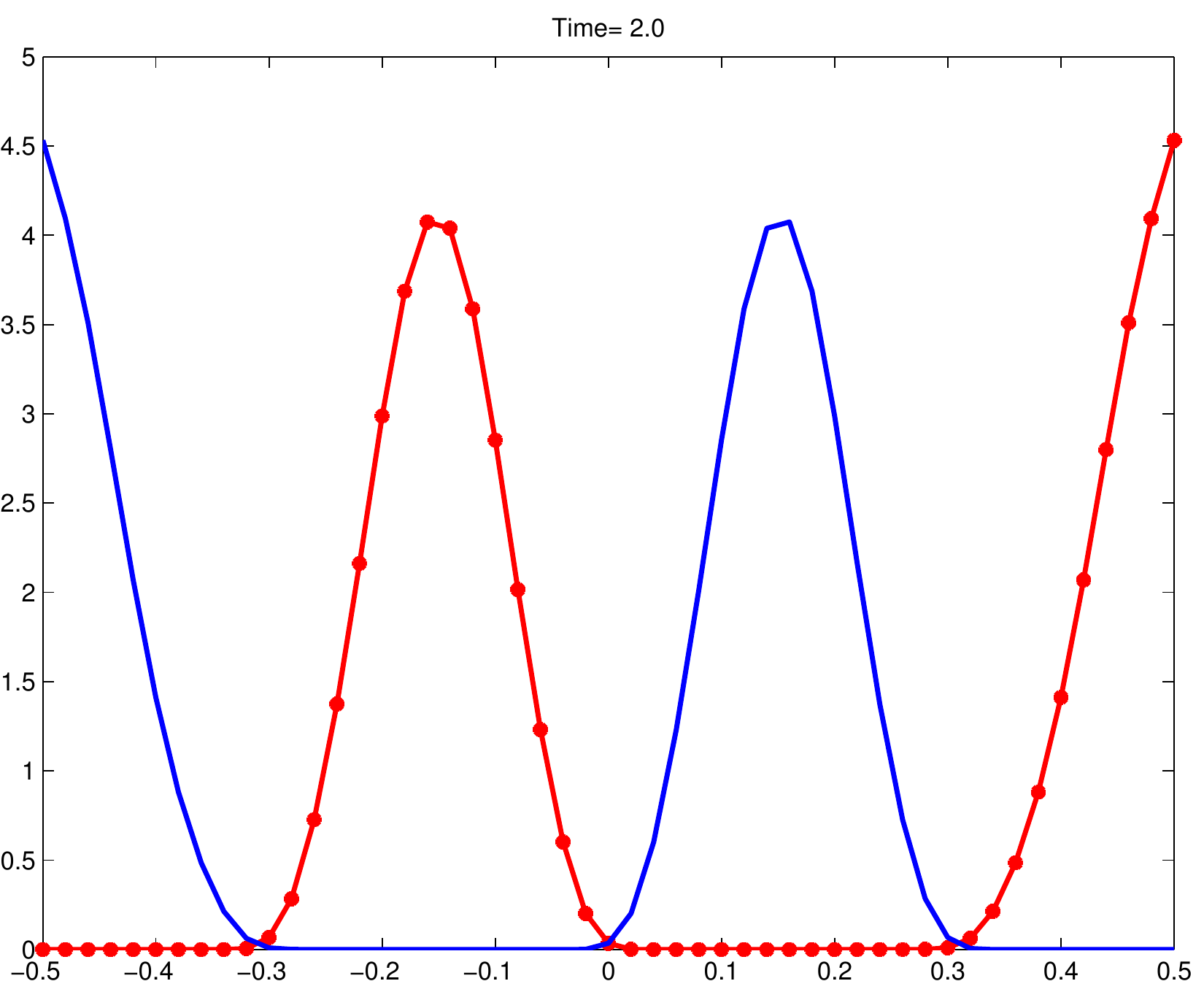}\includegraphics[width=4cm]{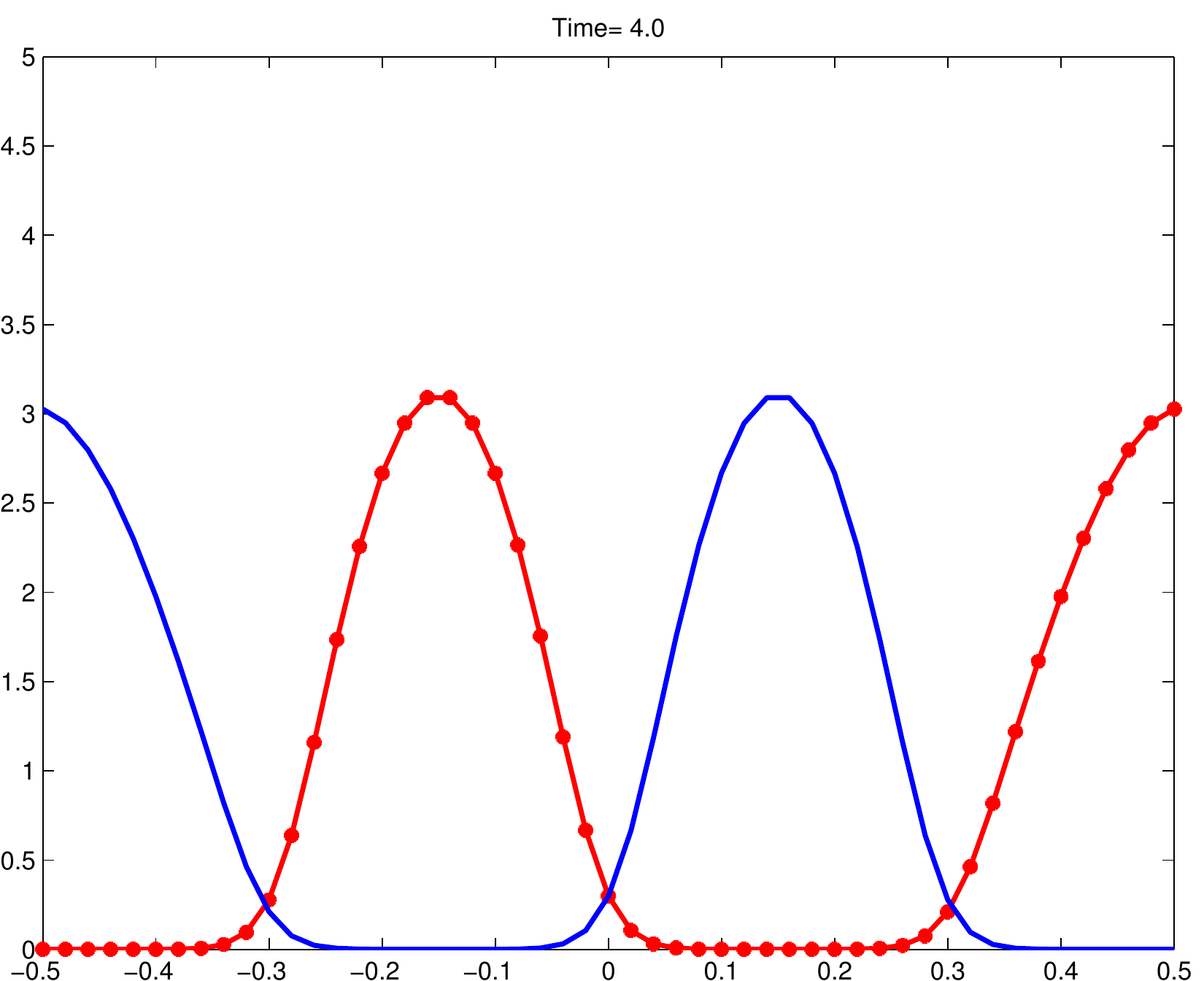}
{\caption{ Evolution of $\mathbf{m}_{\rho,h}^1$({\color{blue}-}) and $\mathbf{m}_{\rho,h}^2$ ({\color{red}{$-*$}}) at the times  $t=0 , 0.1, 0.2, 1, 2, 4$ with  $\nu=0.001$.}
\label{Test2b}}
\end{center}
\end{figure}
\begin{figure}[ht!]
\begin{center}
\includegraphics[width=4cm]{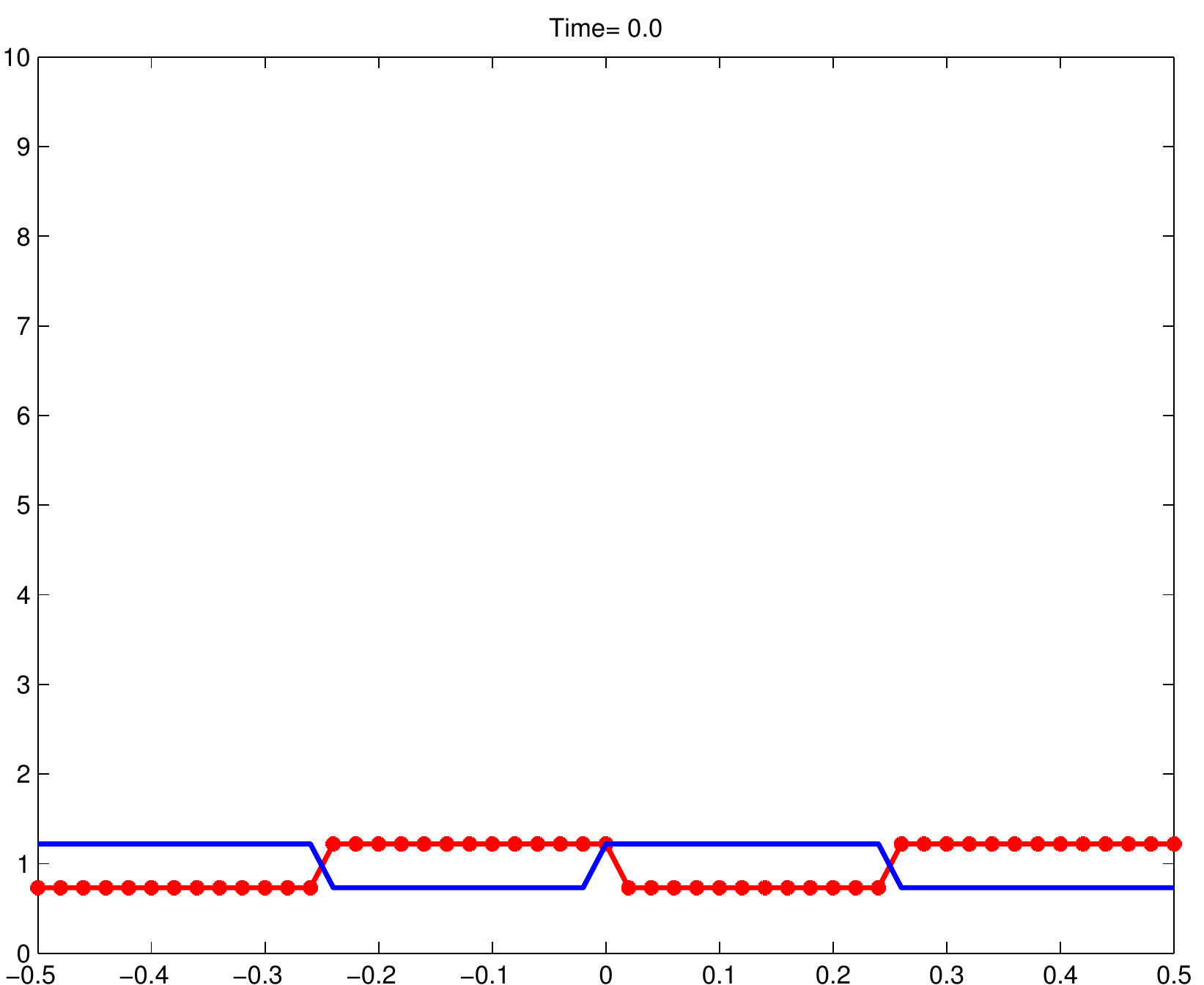}\includegraphics[width=4cm]{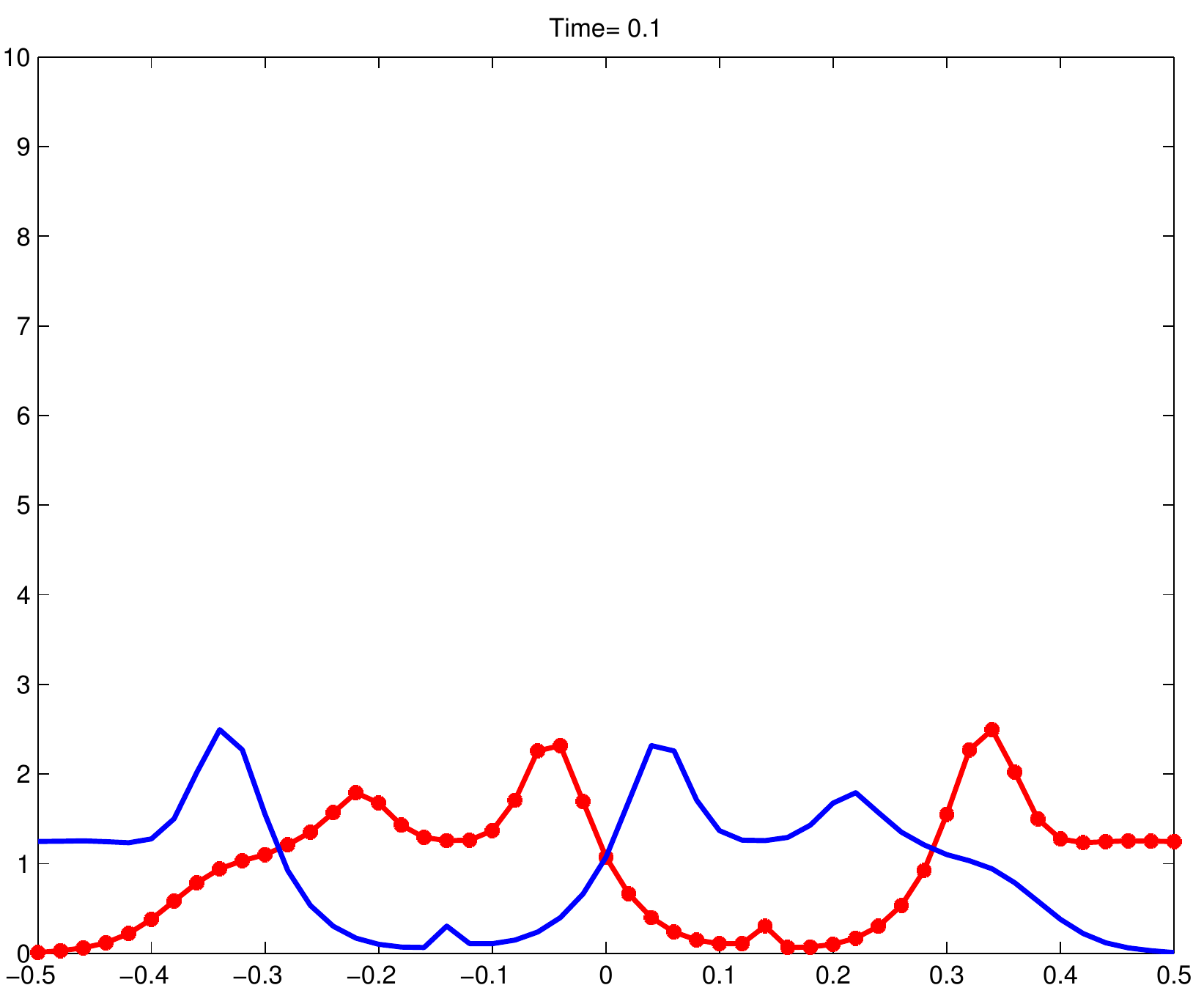}\includegraphics[width=4cm]{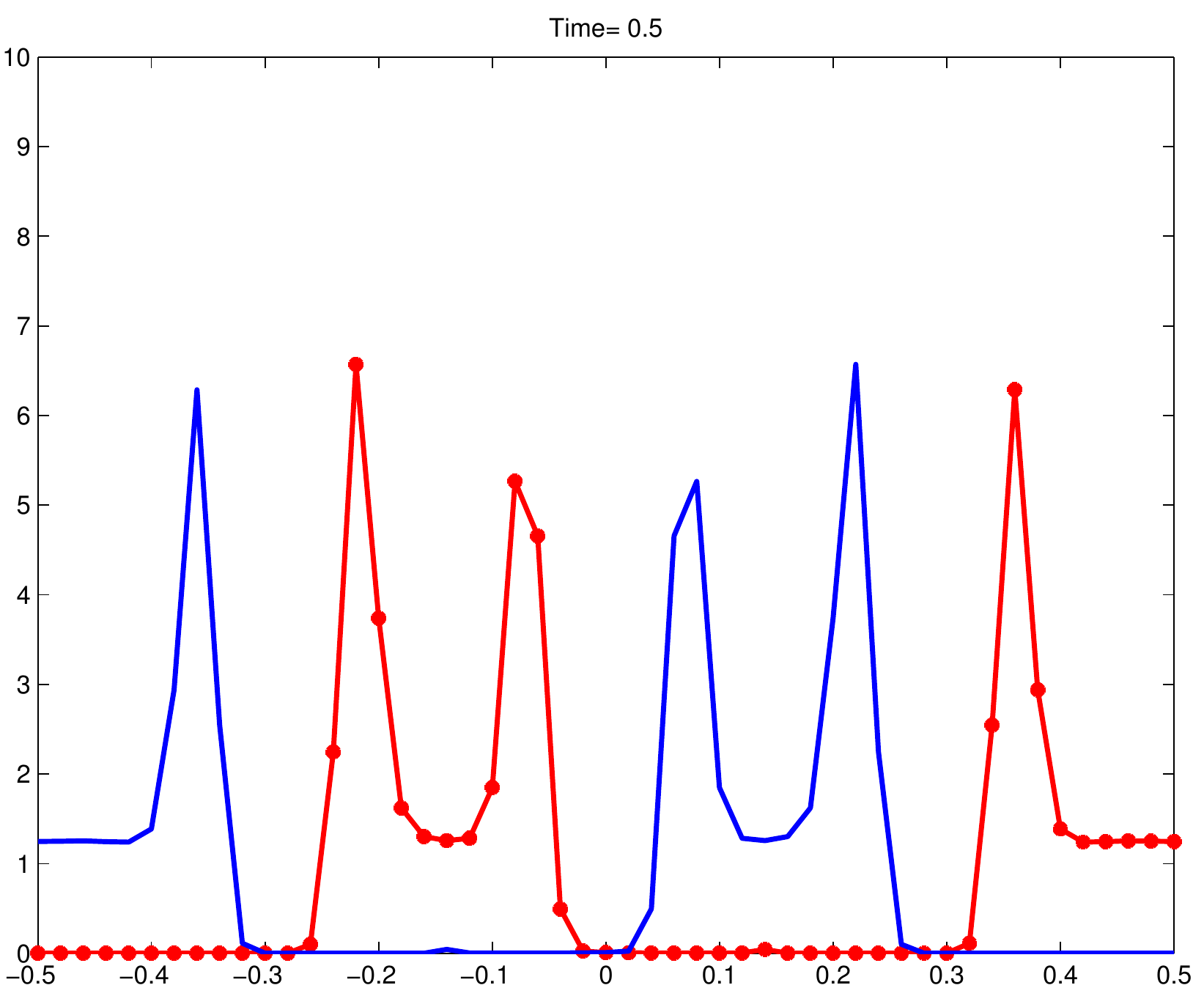}\\
\includegraphics[width=4cm]{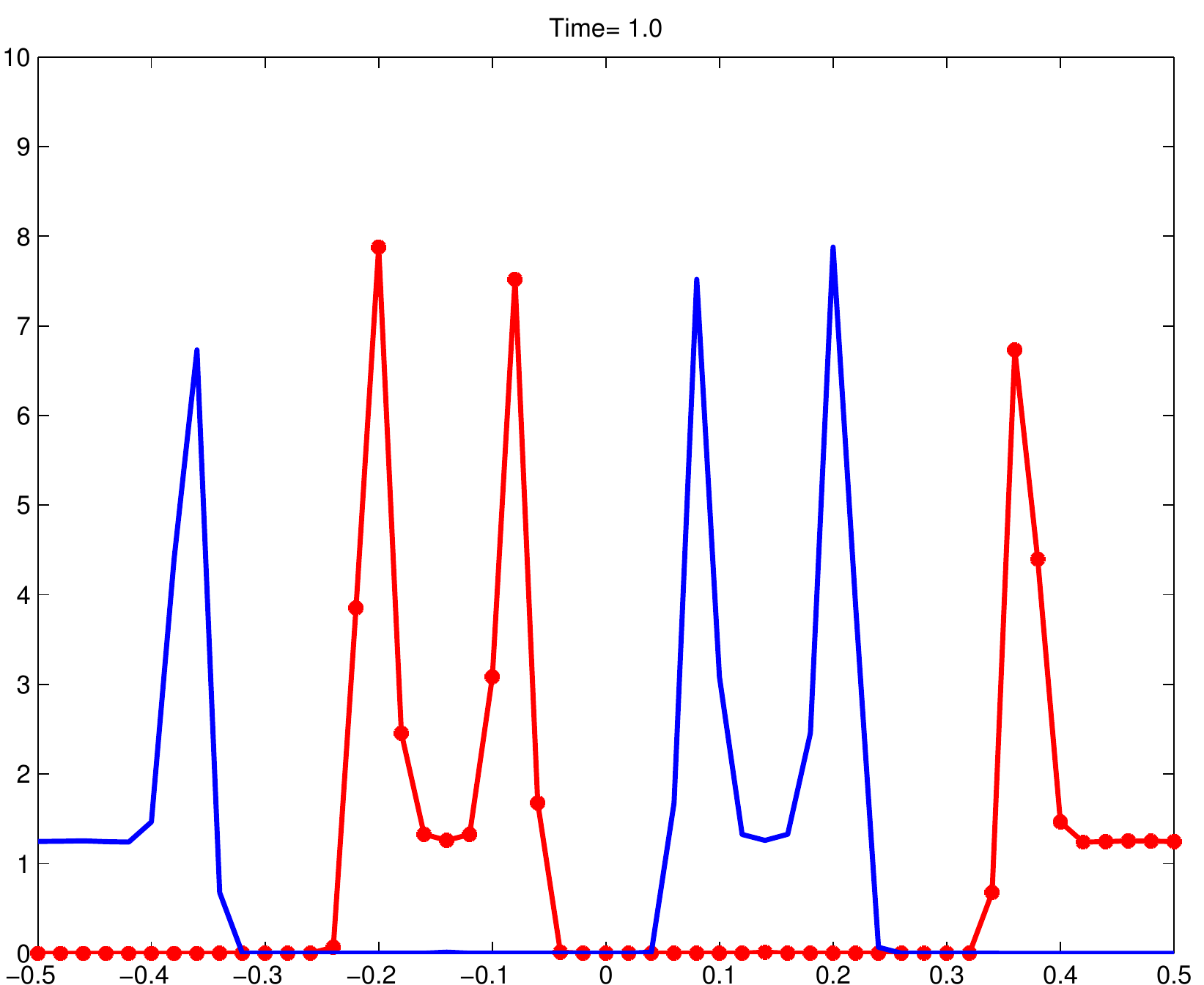}\includegraphics[width=4cm]{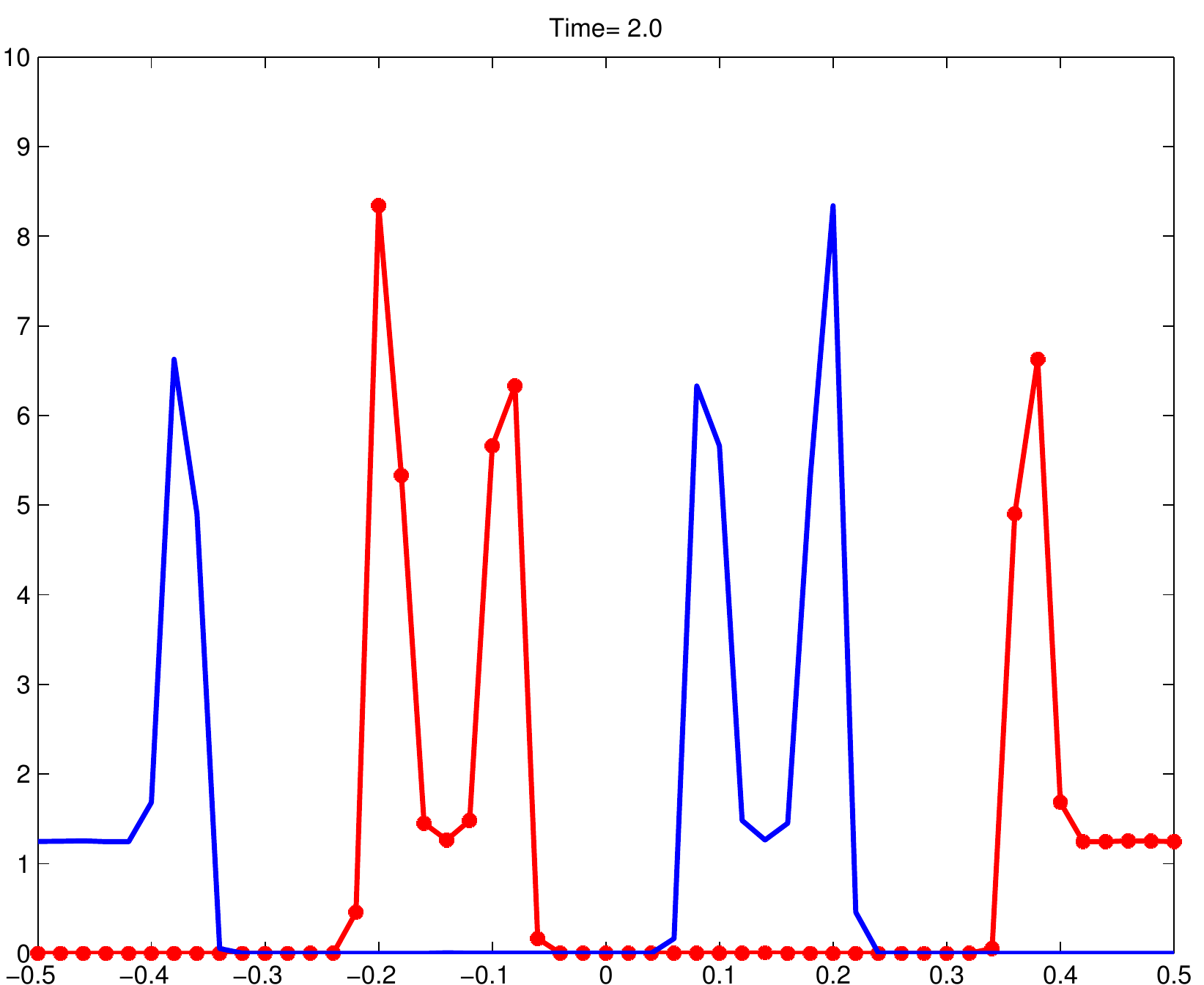}\includegraphics[width=4cm]{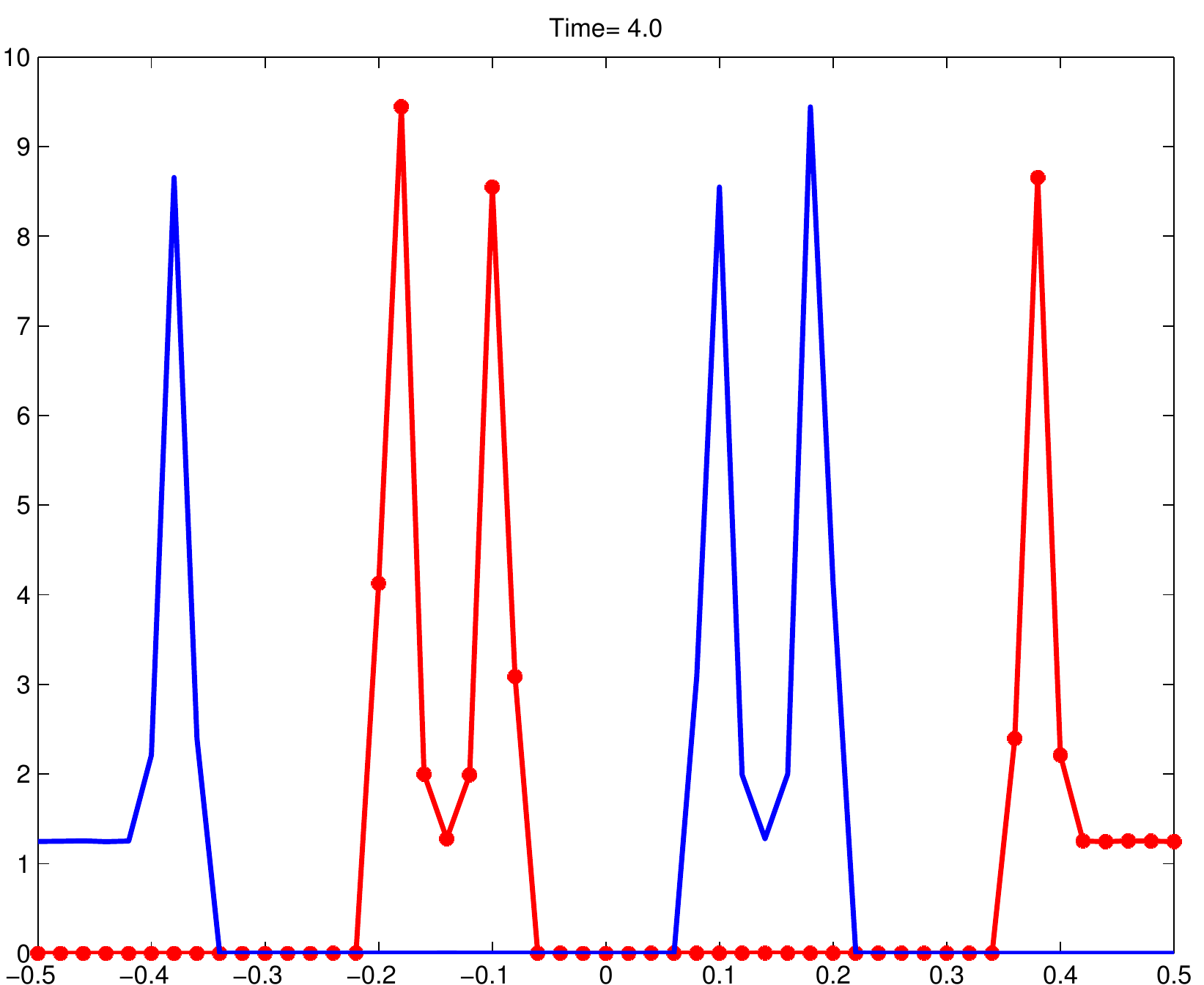}
{\caption{ Evolution of $m^1_{\rho,h}$({\color{blue}-}) and $m^2_{\rho,h}$ ({\color{red}{$-*$}})  at the time  $t=0$, $0.1$, $0.5$, $1$, $2$, $4$ computed with  $\nu=0$.}
\label{Test2c}}
\end{center}
\end{figure}

\bibliographystyle{plain}
\bibliography{bibFP}
\end{document}